\title{Some model theory of $\SL(2,\R)$}
\date{\today}
\author{Jakub Gismatullin\thanks{Supported by the Marie Curie Intra-European Fellowship MODGROUP no. PIEF-GA-2009-254123 and Polish Goverment MNiSW grant N N201 545938}\\University of Leeds and Uniwersytet Wroc\l awski
\and 
Davide Penazzi\thanks{Supported by a postdoctoral fellowship on EPSRC grant EP/I002294/1}\\University of Leeds
\and 
Anand Pillay\thanks{Supported by EPSRC grant EP/I002294/1}\\University of Leeds}
\newtheorem{Theorem}{Theorem}[section]
\newtheorem{Proposition}[Theorem]{Proposition}
\newtheorem{Definition}[Theorem]{Definition} 
\newtheorem{Remark}[Theorem]{Remark}
\newtheorem{Lemma}[Theorem]{Lemma}
\newtheorem{Corollary}[Theorem]{Corollary}
\newtheorem{Fact}[Theorem]{Fact}
\newtheorem*{Claim}{Claim}
\newcommand{\R}{\mathbb R}
\newcommand{\Z}{\mathbb Z}
\newcommand{\Pp}{\mathbb P}
\DeclareMathOperator{\SL}{SL}
\DeclareMathOperator{\PSL}{PSL}
\DeclareMathOperator{\SO}{SO}
\DeclareMathOperator{\Th}{Th}
\DeclareMathOperator{\tp}{tp}
\DeclareMathOperator{\Fin}{Fin}
\DeclareMathOperator{\dcl}{dcl}
\DeclareMathOperator{\st}{st}
\DeclareMathOperator{\cl}{cl}
\begin{document}

\maketitle

\begin{abstract} 
We study the action of $G = \SL(2,\R)$, viewed as a group definable in the structure $M = (\R,+,\times)$, on its type space $S_{G}(M)$. We identify a minimal closed $G$-flow $I$, and an idempotent $r\in I$ (with respect to the Ellis semigroup structure $*$ on $S_{G}(M)$). 
We also show that the ``ideal group" $(r*I,*)$ is nontrivial (in fact it will be the group with $2$ elements), yielding a negative answer to a question of Newelski.
\end{abstract}

\section{Introduction and preliminaries}

Abstract topological dynamics concerns the actions of  (often discrete) groups $G$ on compact Hausdorff spaces $X$. Newelski has suggested in a number of papers such as \cite{Newelski1}, \cite{Newelski2}, that the notions of topological dynamics may be useful for ``generalized stable group theory", namely the understanding of definable groups in unstable settings, but informed by methods of stable group theory. Given a structure $M$ and a group $G$ definable in $M$, we have the (left) action of $G$ on its type space $S_{G}(M)$. When $\Th(M)$ is stable, there is a unique minimal closed $G$-invariant subset $I$ of $S_{G}(M)$ which is precisely the set of generic types of $G$. Moreover (still in the stable case) $S_{G}(M)$ is equipped with a semigroup structure $*$: $p*q = \tp(a\cdot b/M)$ where $a, b$ are independent realizations of $p,q$ respectively, and $(I,*)$ is a compact Hausdorff topological group which turns out to be isomorphic to $G\left(\overline{M}\right)/ G\left(\overline{M}\right)^{0}$ where $\overline{M}$ is a saturated elementary extension of $M$. In fact this nice situation is more or less characteristic of the stable case, so will not extend as such to unstable settings (other than what we have called in \cite{NIPII} ``generically stable groups").  However it was shown in \cite{Pillay-fsg} that for the much larger class of so-called $fsg$ groups definable in $NIP$ theories, the situation is not so far from in the stable case. In the $o$-minimal context the $fsg$ groups are precisely the definably compact groups; for example working in the structure $(\R,+,\times)$, these will be the semialgebraic compact Lie groups. However there is no general model-theoretic machinery (of a stability-theoretic nature) for understanding {\em simple non compact real Lie groups} (and their interpretations in arbitrary real closed fields). In this paper we try to initiate such a study, focusing on $G = \SL(2,\R)$. The reason we work over the standard model $(\R,+,\times)$ rather than an arbitrary or saturated model is that all types over the standard model are definable, hence externally definable sets correspond to definable sets, and the type space is equipped with an ``Ellis semigroup structure" $*$. We expect that analogues of our results hold over arbitrary models, expanded by the externally definable sets. In any case the main objective is to identify a minimal closed $G$-invariant subset $I$ of $S_{G}(M)$, to identify an idempotent element $r\in I$ and to describe the ``ideal group" $r*I$. Now $r*I$ as an abstract group does not depend on the choice of $I$ or $r$.  Newelski asked in \cite{Newelski1} whether  for groups $G$ definable in $NIP$ theories, $G\left(\overline{M}\right)/G\left(\overline{M}\right)^{00}$ is isomorphic, as an abstract group to this $r*I$. In \cite{Pillay-fsg} we gave a positive answer for $fsg$ groups in $NIP$ theories. When $G = \SL(2)$ and $K$ is a saturated real closed field then $G(K)$ is simple (modulo its finite centre) as an abstract group, whereby $G(K) = G(K)^{00}$. However we will show that in the case of $\SL(2,\R)$ acting on its type space the ideal group $r*I$ is the group with $2$ elements, in particular nontrivial, so giving a negative answer to Newelski's question.
Our idempotent will be obtained as an ``independent" (with respect to forking) product of realizations of a generic type of $T^{00}$ over $\R$ and an $H(\R)$-invariant type of $H$ where $T$ is a maximal compact and $H$ is the standard Borel subgroups of $\SL(2,\R)$. Moreover $I$ will be a ``universal minimal" $G$-flow, from the point of view of ``tame" topological dynamics, discussed briefly in the next paragraph. \\

So before getting on to more detailed preliminaries, let us mention that the third author has recently developed  \cite{Pillay-tame} a theory of ``tame" or ``definable" topological dynamics, concerning roughly the action of $(G,{\cal B})$ on a compact space $X$, where $G$ is a (discrete) group, and ${\cal B}$ is a $G$-invariant Boolean algebra of subsets of $G$. We will not give the definition here, but when ${\cal B}$ is the Boolean algebra of all subsets of $G$, this notion  reduces to the standard notion of the (discrete) group $G$ acting by homeomorphisms on $X$. When $G$ is a group definable in a structure $M$, and ${\cal B}$ is the Boolean algebra of definable (with parameters) subsets of $G$, then the ``universal $(G,{\cal B})$-flow with a distinguished dense orbit" is the type space $S_{G}(M)$  (rather than the Stone-Cech compactification $\beta G$ of $G$ as in the standard case). So this makes the study of the action of $G$ on its type space more attractive or intrinsic, although we will not need to know anything about this general theory for the purposes of the current paper. \\

We will assume a basic knowledge of model theory (types, saturation, definable types, heirs, coheirs,....). References are \cite{Poizat} and \cite{Pillay-book}. Let us fix a complete $1$-sorted theory $T$, a saturated model $\overline{M}$ of $T$, and a model $M$ which is elementary substructure of $\overline{M}$. In the body of the paper $T$ will be $RCF$, the theory of real closed fields, in the language of rings, and $M$ will be the ``standard model" $(\R,+,\times, 0, 1)$. By a definable set in $M$ we mean a subset of $M^{n}$ definable (with parameters) in $M$, namely by a formula $\phi(x_{1},..,x_{n},{\bar b})$ where we exhibit the parameters ${\bar b}$ from $M$. $S_{n}(M)$ is the space of complete $n$-types over $M$, equivalently, ultrafilters on the Boolean algebra of definable subsets of $M^{n}$ (which we identify with the Boolean algebra of formulas $\phi(x_{1},..,x_{n})$ with parameters from $M$, up to equivalence).  This is a compact Hausdorff space, under the Stone space topology.

\begin{Definition}
\begin{enumerate}
\item[(i)] A subset $X\subseteq M^{n}$ is externally definable if there is a formula $\phi(x_{1},..,x_{n},{\bar b})$ where the parameters ${\bar b}$ are from $\overline{M}$, such that $X = \{{\bar a}\in M^{n}: \overline{M}\models \phi({\bar a},{\bar b})\}$.
\item[(ii)] By $S_{ext, n}(M)$ we mean the space of ultrafilters on the Boolean algebra of externally definable subsets of $M^{n}$.
\end{enumerate}
\end{Definition}

\begin{Fact} \label{fact:hom} 
For any $p({\bar x})\in S_{ext,n}(M)$, there is a unique $p'({\bar x})\in S_{n}\left(\overline{M}\right)$ which is finitely satisfiable in $M$ and such that the ``trace on $M$" of any formula in $p'$ is in $p$. This sets up a homeomorphism $\iota$ between $S_{ext,n}(M)$ and the closed subspace of $S_{n}\left(\overline{M}\right)$ consisting of types finitely satisfiable in $M$.
\end{Fact}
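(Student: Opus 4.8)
The plan is to exhibit the inverse of $\iota$ explicitly as a ``trace'' or ``restriction'' map and then verify it is a continuous bijection between compact Hausdorff spaces, so that it is automatically a homeomorphism. Concretely, given $p'\in S_n(\overline M)$ finitely satisfiable in $M$, I would set
\[
r(p')=\{\phi(M,\bar b) : \phi(\bar x,\bar b)\in p'\},
\]
the collection of traces on $M$ of formulas in $p'$, where $\phi(M,\bar b)$ denotes $\{\bar a\in M^n : \overline M\models\phi(\bar a,\bar b)\}$. The first task is to check that $r(p')$ is an ultrafilter on the Boolean algebra of externally definable subsets of $M^n$, i.e. a point of $S_{ext,n}(M)$. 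Properness (that $\emptyset\notin r(p')$) is exactly where finite satisfiability enters: a formula with empty trace on $M$ cannot belong to a type finitely satisfiable in $M$. Closure under intersection follows from $(\phi\wedge\psi)(M,\bar b\bar c)=\phi(M,\bar b)\cap\psi(M,\bar c)$ together with $\phi,\psi\in p'\Rightarrow\phi\wedge\psi\in p'$. Finally, for any externally definable $X=\chi(M,\bar d)$, completeness of $p'$ gives $\chi\in p'$ or $\neg\chi\in p'$, so $X\in r(p')$ or its complement $(\neg\chi)(M,\bar d)$ is; a collection that is proper, closed under intersection, and decides every element in this way is an ultrafilter, upward closure being a formal consequence.

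The crux of the argument is the clean two-way correspondence
\[
\phi(\bar x,\bar b)\in p'\iff\phi(M,\bar b)\in r(p').
\]
The forward direction is immediate; for the converse, if $\phi(M,\bar b)\in r(p')$ but $\phi\notin p'$, then $\neg\phi\in p'$, so the complementary trace $(\neg\phi)(M,\bar b)$ also lies in $r(p')$, contradicting properness. This equivalence does the main work. It yields \textbf{injectivity} of $r$ at once, since then $r(p')=r(q')$ forces $p'$ and $q'$ to contain exactly the same formulas. It also gives the \textbf{uniqueness} in the statement: any $p'$ finitely satisfiable in $M$ all of whose traces lie in a given $p$ satisfies $r(p')\subseteq p$, and one ultrafilter contained in another on the same algebra must equal it. For \textbf{surjectivity}, given $p\in S_{ext,n}(M)$ I would define $\iota(p)=p'$ by declaring $\phi(\bar x,\bar b)\in p'$ exactly when $\phi(M,\bar b)\in p$; finite satisfiability in $M$ (hence consistency, via compactness) holds because any finite conjunction of formulas of $p'$ has its trace in the ultrafilter $p$ and is therefore nonempty, while completeness of $p'$ follows from $p$ deciding each externally definable set $\chi(M,\bar d)$. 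One then checks $r(\iota(p))=p$ directly, so $r$ is onto and $\iota=r^{-1}$.

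It remains to upgrade the bijection to a homeomorphism. Here I would first note that the set of types in $S_n(\overline M)$ finitely satisfiable in $M$ --- write it $S_n^{\mathrm{fs}}(\overline M)$ --- is closed, being the intersection over all formulas $\phi(\bar x,\bar b)$ with empty trace of the clopen sets $[\neg\phi(\bar x,\bar b)]$; hence it is compact, and $S_{ext,n}(M)$ is compact Hausdorff as a Stone space. A continuous bijection between compact Hausdorff spaces is a homeomorphism, so it suffices to prove continuity of $r$. For a basic clopen set $[X]=\{p : X\in p\}$ of $S_{ext,n}(M)$ with $X=\phi(M,\bar b)$, the displayed correspondence gives $r^{-1}([X])=[\phi(\bar x,\bar b)]\cap S_n^{\mathrm{fs}}(\overline M)$, which is relatively clopen; thus $r$ is continuous and $\iota$ is a homeomorphism. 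The only genuinely delicate point is the verification that $r(p')$ is a bona fide ultrafilter and the resulting equivalence $\phi\in p'\iff\phi(M,\bar b)\in r(p')$, precisely because the trace map on formulas is highly non-injective and because inclusion of traces on $M$ need not reflect inclusion over $\overline M$; everything else is a soft compactness argument.
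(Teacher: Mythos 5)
Your proof is correct and complete. Note that the paper offers no proof of this Fact at all --- it is stated as standard background material in the preliminaries --- so there is no argument of the paper to compare against; what you have written is the canonical argument that the authors implicitly rely on. The two points that genuinely require care, namely that $r(p')$ is a proper ultrafilter on the algebra of externally definable sets (properness coming exactly from finite satisfiability in $M$) and the equivalence $\phi(\bar x,\bar b)\in p'\iff\phi(M,\bar b)\in r(p')$ despite the trace map on formulas being non-injective, are both handled correctly and are exactly where a sloppier write-up would go wrong; the remaining steps (closedness of the set of types finitely satisfiable in $M$, continuity of $r$ on basic clopen sets, and the upgrade of a continuous bijection between compact Hausdorff spaces to a homeomorphism) are routine and also correct.
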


Note that if all types over $M$ are definable, then externally definable subsets of $M^{n}$ are definable and $S_{ext,n}(M)$ coincides with $S_{n}(M)$.

\begin{Lemma} \label{lem:heir} Suppose that all types over $M$ are definable, and let $p(x)\in S_{n}(M)$.
\begin{enumerate}
\item[(i)] For any $B\supseteq M$, $p$ has a unique coheir $p'(x)\in S_{n}(B)$, namely an extension of $p$ to a complete type over $B$ which is finitely satisfiable in $M$.
\item[(ii)] For any $B\supseteq M$, $p$ has a unique heir over $B$, which we write as $p|B$. $p|B$ can also be characterized as the unique extension of $p$ to $B$ which is definable over $M$. Moreover $p|B$ is simply the result of applying the defining schema for $p$ to the set of parameters $B$.
\item[(iii)] For any tuples $b,c$ from $\overline{M}$, $\tp(b/M,c)$ is definable over $M$ if and only if $\tp(c/M,b)$ is finitely satisfiable in $M$.
\end{enumerate}
\end{Lemma}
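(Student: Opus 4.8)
The plan is to prove (iii) first, since it is the engine that drives (i) and (ii). Before that I would record the classical \emph{heir--coheir symmetry}, valid with no extra hypothesis: for tuples $b,c$ from $\overline M$, the type $\tp(b/M,c)$ is an heir of $\tp(b/M)$ if and only if $\tp(c/M,b)$ is finitely satisfiable in $M$. This is proved by unwinding the two definitions and swapping the roles of the object variable and the parameter: a formula $\psi(y,b)\in\tp(c/M,b)$ witnessing failure of finite satisfiability is read, via $\chi(x,z):=\psi(z,x)$, as an instance $\chi(x,c)\in\tp(b/M,c)$ whose parameter $c$ cannot be pulled back into $M$, and conversely.

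For (iii), the direction ``definable $\Rightarrow$ finitely satisfiable'' is essentially immediate: if $\tp(b/M,c)$ is definable over $M$ then, writing $d\chi$ for its $M$-definition, any $\psi(y,b)\in\tp(c/M,b)$ yields $\overline M\models\exists w\, d\chi(w)$ (witnessed by $c$), and elementarity of $M\prec\overline M$ produces a witness $m\in M$, giving $\models\psi(m,b)$. The reverse direction is the heart of the whole lemma and the step I expect to be the main obstacle. Assuming $\tp(c/M,b)$ is finitely satisfiable in $M$, I would use the blanket hypothesis to fix the $M$-definition $d_{p}$ of $p:=\tp(b/M)$ and then show that $\tp(b/M,c)$ is forced to be exactly the \emph{schema extension} $p|Mc$ given by $\chi(x,c)\in p|Mc\iff\overline M\models d_{p}\chi(c)$. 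The trick is: if some $\chi(x,c)\in\tp(b/M,c)$ had $\overline M\models\neg d_{p}\chi(c)$, then both $\chi(b,y)$ and $\neg d_{p}\chi(y)$ lie in $\tp(c/M,b)$, so finite satisfiability yields a single $m\in M$ with $\models\chi(b,m)\wedge\neg d_{p}\chi(m)$; but $\models\chi(b,m)$ means $\chi(x,m)\in p$, i.e.\ $M\models d_{p}\chi(m)$, contradicting $\models\neg d_{p}\chi(m)$. Hence $\tp(b/M,c)=p|Mc$ is definable over $M$.

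With (iii) in hand, (ii) follows quickly. The schema extension $p|B$ is a complete type, is definable over $M$ by construction, and is an heir because $\overline M\models d_{p}\phi(b)$ forces $\overline M\models\exists y\, d_{p}\phi(y)$ and hence, by elementarity, a witness $m\in M$ with $\phi(x,m)\in p$. For uniqueness, if $q$ is any heir of $p$ over $B$, then for each finite $c\subseteq B$ and $a\models q$ the symmetry gives $\tp(c/M,a)$ finitely satisfiable in $M$, and the reverse direction of (iii), applied in this form, already identifies $q\restriction Mc$ with the schema extension $p|Mc$; letting $c$ vary gives $q=p|B$. The characterization ``unique extension definable over $M$'' is then read off by running the chain the other way: a definable-over-$M$ extension is, by (iii) and the symmetry, an heir, hence equal to $p|B$.

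Finally (i). Existence of a coheir is the standard fact that the trace filter of $p$ extends to an ultrafilter on the externally definable subsets of $M^{n}$. For uniqueness I would argue dually. Given two coheirs $p_{1}',p_{2}'$ of $p$ over $B$ differing on some $\phi(x,c)$, take realizations $a_{i}\models p_{i}'$; each $\tp(a_{i}/Mc)$ is finitely satisfiable in $M$, so by the reverse direction of (iii) (now with $r:=\tp(c/M)$, definable by the blanket hypothesis) $\tp(c/Ma_{i})$ equals the schema extension $r|Ma_{i}$. Since $a_{1}\equiv_{M}a_{2}$ and the definition $d_{r}$ has parameters only in $M$, the truth value of $\phi(a_{i},c)$ is computed by $\overline M\models d_{r}\psi(a_{i})$ with $\psi(y,w):=\phi(w,y)$, and this is the same for $i=1,2$ --- contradicting that the two coheirs disagreed. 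Thus the coheir is unique.
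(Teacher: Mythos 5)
Your proposal is correct, and in fact the paper offers no proof of this lemma at all: it is stated as standard background (the sort of material covered by the cited references of Poizat and Pillay), so there is nothing in the paper to compare against beyond the statement itself. Your argument is the classical one --- heir--coheir symmetry by swapping object and parameter variables, then using the defining schema $d_p$ together with finite satisfiability to pin the type down as the schema extension --- and all three parts follow correctly from it; the only step you use silently is that the schema extension $p|B$ of a complete type is itself a consistent complete type (needed to conclude $\tp(b/M,c)=p|Mc$ from one inclusion, and asserted without proof in your part (ii)), which is standard and follows by elementarity from completeness and consistency of $p$ over $M$.
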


Now suppose $G$ is a group definable over $M$. We identify $G$ with the group $G\left(\overline{M}\right)$ and write $G(M)$ for the points in the model $M$. We have the spaces of types $S_{G}(M)$, $S_{ext,G}(M)$ and $S_{G}\left(\overline{M}\right)$. For $g,h\in G$ we write $gh$ for the product. $G(M)$ acts (on the left) by homeomorphisms on $S_{G}(M)$ and $S_{ext,G}(M)$. 

\begin{Definition} Let $p(x), q(x)\in S_{ext,G}(M)$. Let $b$ realize $q$ in $G$, and let $a$ realize the unique $p'\in S_{G}\left(\overline{M}\right)$ given by \ref{fact:hom}. We define $p*q$ to be the (external) type of $ab$ over $M$. So in the case when all types over $M$ are definable, this just means: let $b\in G$ realize $q$ and let $a\in G$ realize the unique coheir of $p$ over $M,b$, then $p*q = \tp(ab/M)$.
\end{Definition}

The following is contained in \cite{Newelski1} and \cite{Newelski2}. Everything can be proved directly, but it is a special case of the theory of abstract topological dynamics, as treated in \cite{Auslander} for example.

\begin{Lemma}
\begin{enumerate}
\item[(i)] $(S_{ext,G}(M),*)$ is a semigroup, which we call the Ellis semigroup, and $*$ is continuous in the first coordinate, namely for any $q\in S_{ext,G}(M)$ the map taking $p\in S_{ext,G}(M)$ to $p*q\in S_{ext,G}(M)$ is continuous.
\item[(ii)] Left ideals of $S_{ext,G}(M)$ (with respect to $*$) coincide with subflows, namely closed $G(M)$-invariant subsets.
\item[(iii)] If $I\subseteq S_{ext,G}(M)$ is a minimal subflow, then $I$ contains an idempotent $r$ such that $r*r = r$, and $(r*I,*)$ is a group, whose isomorphism type does not depend on $I$ or $r$.
\end{enumerate}
\end{Lemma}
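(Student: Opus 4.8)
The plan is to recognize $(S_{ext,G}(M),*)$ as (essentially) the enveloping Ellis semigroup of the flow $G(M)\curvearrowright S_{ext,G}(M)$ — a compact Hausdorff semigroup in which each right translation $\rho_q\colon p\mapsto p*q$ is continuous — so that part (iii) becomes an instance of the abstract structure theory of such semigroups (the Ellis--Numakura lemma and the minimal-ideal decomposition). The genuinely model-theoretic content sits in (i) and (ii), so I would prove those directly and only invoke \cite{Auslander} for the purely topological-dynamical facts in (iii). Throughout I will use that the realized types $\widehat g=\tp(g/M)$, $g\in G(M)$, are dense in $S_{ext,G}(M)$ and that for such $g$ one has $\widehat g * p=g\cdot p$: indeed $\widehat g$ contains the formula $x=g$, so its unique coheir over any $Mb$ is realized by $g$ itself, giving $\widehat g*p=\tp(gb/M)=g\cdot p$.

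For (i), associativity follows from transitivity of finite satisfiability. Given $p,q,s$, I would pick $c\models s$, then $b$ realizing the coheir of $q$ over $Mc$, then $a$ realizing the coheir of $p$ over $Mbc$. The standard fact that $\tp(a/Mbc)$ and $\tp(b/Mc)$ finitely satisfiable in $M$ force $\tp(ab/Mc)$ finitely satisfiable in $M$ (a formula over $Mc$ true of $(a,b)$ is first specialized in $x$ to a point of $M$ using the first hypothesis, then in $y$ using the second) shows that $ab$ realizes the coheir of $p*q$ over $Mc$, unique by Lemma \ref{lem:heir}(i), so $(p*q)*s=\tp(abc/M)$; symmetrically $bc\models q*s$ and $a$ still realizes the coheir of $p$ over $M(bc)$, so $p*(q*s)=\tp(abc/M)$. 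Continuity of $\rho_q$ I would get by exhibiting, for each formula $\phi$ in the group variable, the preimage $\{p:\phi\in p*q\}$ as a basic clopen set. Working in the paper's setting where all types over $M$ are definable (Lemma \ref{lem:heir}), the coheir $p'$ of $p$ over $Mb$ is characterized by $\psi(x,b)\in p'\iff\{m\in G(M):\models\psi(m,b)\}\in p$; taking $\psi(x,y)=\phi(x\cdot y)$, this identifies $\{p:\phi\in p*q\}$ with the clopen set defined by the $q$-definition $(d_q y)\,\phi(x\cdot y)$, a formula over $M$. The general externally-definable case reduces to this via the homeomorphism $\iota$ of Fact \ref{fact:hom}.

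For (ii), one direction is immediate: if $J$ is a left ideal then for $g\in G(M)$ and $p\in J$ we have $g\cdot p=\widehat g*p\in S_{ext,G}(M)*J\subseteq J$, so $J$ is $G(M)$-invariant, and the closed left ideals are exactly the subflows. Conversely, let $J$ be a subflow and fix $q\in J$. By $G(M)$-invariance the continuous map $\rho_q$ sends each $\widehat g$ to $g\cdot q\in J$; since $\{\widehat g:g\in G(M)\}$ is dense and $J$ is closed, $\rho_q$ sends all of $S_{ext,G}(M)$ into $J$, i.e. $S_{ext,G}(M)*q\subseteq J$. As $q\in J$ was arbitrary, $J$ is a left ideal.

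For (iii), a minimal subflow $I$ is a closed, hence compact, left ideal and in particular a sub-semigroup on which every $\rho_q$ is continuous; the Ellis--Numakura lemma then yields an idempotent $r\in I$ with $r*r=r$. To see $(r*I,*)$ is a group with identity $r$, I would first note that minimality gives $I*q=I$ for every $q\in I$ (since $I*q$ is a nonempty left ideal contained in $I$). Then $r$ is a left identity on $r*I$ because $r*(r*p)=r*p$; and for $q\in r*I\subseteq I$, choosing $s\in I$ with $s*q=r$ (possible as $r\in I=I*q$) gives the left inverse $r*s\in r*I$ with $(r*s)*q=r$. A semigroup with a left identity and left inverses is a group, so $r*I$ is a group. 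Finally, the fact that the isomorphism type of $r*I$ is independent of the choice of minimal $I$ and idempotent $r$ is Ellis's theorem on ideal groups, for which I would cite \cite{Auslander}. I expect the only real obstacle to be this abstract input — the existence of the idempotent via Ellis--Numakura and the uniqueness of the ideal group up to isomorphism — whereas the model-theoretic steps (associativity, continuity, and the ideal/subflow correspondence) are routine given the coheir calculus of Lemma \ref{lem:heir}.
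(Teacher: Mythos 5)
Your proposal is correct, and it in fact supplies a proof where the paper gives none: the paper states this lemma with only the remark that it "is contained in \cite{Newelski1} and \cite{Newelski2}" and "can be proved directly, but it is a special case of the theory of abstract topological dynamics" \cite{Auslander}. Your hybrid route — direct coheir-calculus arguments for (i) and (ii), with the Ellis--Numakura lemma and Ellis's uniqueness theorem for the ideal group imported from \cite{Auslander} — is exactly the route the paper gestures at, and your model-theoretic steps are sound: the product-of-coheirs fact gives associativity, the identification of $\{p:\phi\in p*q\}$ with the clopen set of the $q$-definition $(d_qy)\,\phi(x\cdot y)$ gives continuity of $\rho_q$ (legitimately using that all types over $M=\R$ are definable, with Fact \ref{fact:hom} covering the general external case), and density of realized types plus $\widehat g*p=g\cdot p$ gives the ideal/subflow correspondence. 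Two small points of hygiene, neither a gap: in (ii) the lemma's phrase ``left ideals coincide with subflows'' must be read as ``\emph{closed} left ideals coincide with subflows'' (an arbitrary union of sets $S*q$ is a left ideal but need not be closed), which is the reading your argument actually establishes; and in (iii), applying minimality of $I$ to conclude $I*q=I$ requires knowing $I*q$ is closed — this is immediate since $I*q=\rho_q(I)$ is the continuous image of a compact set, but it is worth saying, as it is the one place compactness (rather than just invariance) enters.
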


As mentioned earlier, in the stable case there is a unique minimal subflow, the space of generic types of $G$ over $M$. We will, below, consider the case where $T$ is the theory of real closed fields, $M = (\R,+,\times)$ is the standard model and $G = \SL(2,-)$. Sometimes we write $\R$ for $M$ to be consistent with standard notation. So $G(\R)$ is the interpretation of $G$ in $M$, namely $\SL(2,\R)$, and $\R$ as a structure is $(\R,+,\times)$. It is well-known that all types over $\R$ are definable \cite{Marker-Steinhorn}, hence Lemma 1.5 applies to $G(\R)$ acting on $S_{G}(\R)$.

\section{$\SL(2,\R)$} \label{sec:sl}

We review some basic and well-known facts about $\SL(2,\R)$, the group of $2\times 2$ matrices over $\R$ with determinant $1$. All the objects, maps etc. we mention will be semialgebraic and so pass over to $\SL(2,K)$ where $K$ is a saturated real closed field. We sometimes write $G$ for $\SL(2)$, so $G(\R)$ for $\SL(2,\R)$. Write $I$ for the identity matrix. The centre of $\SL(2,\R)$ is $\{I,-I\}$. The quotient of $\SL(2,\R)$ by this centre is called $\PSL(2,\R)$. \\

 %and $\SL(2,K)$ for $K$ a saturated real closed field, which are needed. $\SL(2,\R)$ is the group of $2\times 2$ matrices of determinant $1$. It is a simple group modulo its $2$ element centre. It is well-known that the same is true for $\SL(2,K)$. The Iwasaw decomposition of $\SL(2,\R)$ also passes to $\SL(2,K)$. We will let $G$ denote $\SL(2,K)$. 

$H(\R)$ will denote the standard Borel subgroup of $G(\R)$, namely the subgroup consisting of matrices $\left(\begin{array}{cc} b& c\\ 0&b^{-1}    \end{array}\right)$ where $b\in \R_{>0}$ and $c\in \R$. $H(\R)$ is precisely the semidirect product of $\left(\R_{>0},\times\right)$ with $(\R,+)$. We let $T(\R)$ denote $\SO(2,\R)$: the subgroup of $G(\R)$ consisting of matrices $\left(\begin{array}{cc} x&-y\\ y&x    \end{array}\right)$  with $x,y\in \R$ and $x^{2} + y^{2} = 1$. The symbol $T$ here stands for torus. $H(\R)\cap T(\R) = \{I\}$ and any element of $G$ can be written uniquely in the form $ht$ (as well as $t_{1}h_{1}$) for $t,t_{1}\in T$ and $h,h_{1}\in H$. $T(\R)$ is a maximal compact subgroup of $G(\R)$. Note that $-I \in T(\R)$. \\
  
We write $V(\R)$ for the homogeneous space $G(\R)/H(\R)$  (space of left cosets $\{gH(\R): g\in G(\R)\}$), and $\pi$ (or $\pi(\R)$) for the projection  $G(\R) \to V(\R)$. Note that $\pi_{|T(\R)}\colon T(\R) \to V(\R)$ is a homeomorphism. We write the action of $G(\R)$ on $V(\R)$ by $\cdot$. Understanding this action will be quite important for us. The usual action of $G(\R)$ on the real projective line by Mobius transformations factors through the action of $G(\R)$ on $V(\R)$, and we will try to describe what is going on.\\
  
So this standard action of $G(\R)$ on $\Pp^{1}(\R)$ is: 
$\left(\begin{array}{cc} a& b\\ c&d    \end{array}\right)\cdot \left(\begin{array}{c} x\\y    \end{array}\right) = \left(\begin{array}{c} ax+by\\ cx+dy    \end{array}\right)$, where  $\left(\begin{array}{c} x\\ y    \end{array}\right)$ is a representative of an element of $\Pp^{1}(\R)$. It is well defined because $\left(\begin{array}{cc} a& b\\ c&d    \end{array}\right)$ has determinant $1$. 

We identify  $\left(\begin{array}{c} x\\ 1    \end{array}\right)$ with $x\in \R$, and treat $\left(\begin{array}{c} 1\\ 0    \end{array}\right)$ as the ``point at infinity". It is easy to prove the following fact.

\begin{Remark} \label{rem:stab}
\begin{enumerate}
\item[(i)] $Stab_{G(\R)}\left(\begin{array}{c} 1\\ 0    \end{array}\right) =  H_{1}(\R)$, where $H_{1}(\R) = H(\R)\times \{I,-I\}$.
\item[(ii)] $Z(G(\R)) = \{I,-I\}$ acts trivially on $\Pp^{1}(\R)$, and the resulting action of $\PSL(2,\R) = G(\R)/Z(G(\R))$ on $\Pp^{1}(\R)$ is the usual faithful action.
\end{enumerate}
\end{Remark}

Let $\pi_{1}$ denote the map from $G(\R)$ to $\Pp^{1}(\R)$ taking $g$ to $g\cdot \left(\begin{array}{c} 1\\ 0    \end{array}\right)$.  
So by Remark \ref{rem:stab}$(i)$, $\pi_{1}$ induces an isomorphism of $G(\R)$-homogeneous spaces $G(\R)/H_{1}(\R)$ and $\Pp^{1}(\R)$. Moreover 

\begin{Remark} \label{rem:homeo}
The restriction of $\pi_{1}$ to $T(\R)$ induces a homeomorphism between $T(\R)/\{I,-I\}$ and  $\Pp^{1}(\R)$, such that the identity of $T(\R)/\{I,-I\}$ goes to $\left(\begin{array}{c} 1\\ 0    \end{array}\right)$. 
\end{Remark}

Finally, by virtue of the homeomorphism $\pi_{|T(\R)}$ between $T(\R)$ and $V(\R)$ and the action of $G(\R)$ on $V(\R)$, we have an action (also written $\cdot$) of $G(\R)$ on $T(\R)$. Note  that $g\cdot t$ is the unique $t_{1}\in T(\R)$ such that $gt = t_{1}h_{1}$ for some (unique) $h_{1}\in H(\R)$. 
Likewise by virtue of Remark \ref{rem:homeo}, and the action of $G(\R)$ on $\Pp^{1}(\R)$,  we obtain an action $\cdot_{1}$ of $G(\R)$ on $T(\R)/\{I,-I\}$. We clearly have:

\begin{Remark} \label{rem:act} The action $\cdot_{1}$ of $G(\R)$ on $T(\R)/\{I,-I\}$ is induced by the action $\cdot$ of $G(\R)$ on $T(\R)$. In particular, for any $g\in G(\R)$ and $t\in T(\R)$, $g\cdot t  \in g\cdot_{1}(t/\{I,-I\})$. 
\end{Remark}

As remarked above all this passes to a saturated model $K$ of $RCF$ in place $\R$. We write $G$ for $G(K) = \SL(2,K)$, $H$ for $H(K)$ etc, $V$ for $V(K)$ etc. But now our groups and homogeneous spaces contain nonstandard points, and the study of their types and interaction, is what this paper is about.

\section{Main results}

We follow the conventions at the end of the last section. ($G = \SL(2)$, $K$ a saturated real closed field, etc.) We say that $a\in K$ is \emph{infinite}, if $a>\R$. And call $a$ \emph{negative infinite} if $a< \R$. $\Fin(K)$ denotes the elements of $K$ which are neither infinite nor negative infinite. Any $a\in \Fin(K)$ has a standard part $\st(a)\in \R$.  Also given $B\subset K$, $a$ is \emph{infinite (negative infinite) over $B$} if $a > \dcl(B)$ ($a<\dcl(B)$). Call $a\in K$ \emph{positive infinitesimal} if $a > 0$ and $a<r$ for all positive $r\in \R$. Likewise for \emph{negative infinitesimal} and for \emph{infinitesimal over $B$}. Note that if for example $a\in K$ is positive infinitesimal, $p(x)= \tp(a/\R)$, and $B\subset K$ then $p|B$ is the type of an element which is positive infinitesimal over $B$.

We sometimes write $g/H$ for the left  coset $gH$. The projection $\pi\colon G \to V = G/H$ induces a surjective continuous map which we also call $\pi$ from $S_{G}(\R)$ to $S_{V}(\R)$. Both these type spaces are acted on (by homeomorphisms) by $G(\R)$, and we clearly have:

\begin{Lemma} \label{lem:inv} $\pi$ is $G(\R)$-invariant: namely for any $p\in S_{G}(\R)$, and $g\in G(\R)$, $\pi(gp) = g\cdot \pi(p)$.
\end{Lemma}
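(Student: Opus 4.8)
The plan is to reduce the statement to the $G$-equivariance of the underlying semialgebraic map $\pi\colon G \to V$, and then transport this equivariance to the level of type spaces by working with realizations in the saturated model $K$. All three maps in play — the induced map $\pi$ on type spaces, the action of $g$ on $S_{G}(\R)$, and the action of $g$ on $S_{V}(\R)$ — are obtained from $\R$-definable maps on the underlying sets (namely $\pi\colon G\to V$, left translation $x\mapsto gx$ on $G$, and $v\mapsto g\cdot v$ on $V$) by passing to the induced maps on Stone spaces, so the identity should follow formally from the compatibility of this passage with composition.

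First I would spell out how each operation behaves on realizations. For $p\in S_{G}(\R)$ pick $a\models p$ with $a\in G(K)$; then $\pi(p)=\tp\!\left(\pi(a)/\R\right)$ and, for $g\in G(\R)$, $gp=\tp(ga/\R)$, since $ga\models gp$ by the definition of the (left translation) action. Dually, the action on $S_{V}(\R)$ is: if $v\models q$ then $g\cdot q=\tp\!\left(g\cdot v/\R\right)$. Next I would invoke the equivariance of $\pi$ on points, i.e.\ $\pi(ga)=g\cdot\pi(a)$, which is just the identity $gaH=g\cdot(aH)$ and holds in any real closed field, in particular in $K$ for $g\in G(\R)$ and $a\in G(K)$ realizing $p$.

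Combining these, for $a\models p$ I would compute
\[
\pi(gp)=\tp\!\left(\pi(ga)/\R\right)=\tp\!\left(g\cdot\pi(a)/\R\right)=g\cdot\tp\!\left(\pi(a)/\R\right)=g\cdot\pi(p),
\]
where the first equality uses $ga\models gp$, the second uses pointwise equivariance of $\pi$, the third is the definition of the $g$-action on $S_{V}(\R)$ applied to $q=\pi(p)$, and the last is the definition of $\pi(p)$.

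The only point requiring a moment's care is verifying that $ga$ genuinely realizes $gp$ and that $\pi(ga)$ realizes $g\cdot\pi(p)$ over $\R$; but both reduce to the fact that the actions on the type spaces are, by construction, exactly the maps induced on Stone spaces by the corresponding $\R$-definable set maps, so there is no real obstacle. The entire content of the lemma is the pointwise equivariance $\pi(ga)=g\cdot\pi(a)$, which is immediate, and the rest is bookkeeping with realizations.
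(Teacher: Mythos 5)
Your proof is correct, and it matches the paper's treatment: the paper states this lemma with no proof at all (``we clearly have''), precisely because the content is the pointwise equivariance $\pi(ga)=g\cdot(aH)=gaH$ transported to type spaces via realizations, which is exactly the bookkeeping you carry out.
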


\begin{Definition} Let $p_{1}\in S_{G}(\R)$, and $q\in S_{V}(\R)$. Define $p_{1}*q$ to be $\tp(g\cdot b)$ where $b$ realizes $q$, and $g$ realizes the unique coheir of $p_{1}$ over $M,b$. 
\end{Definition}

With above notation, the following extends Lemma \ref{lem:inv}.

\begin{Lemma} \label{lem:p} For any $p, p_{1}\in S_{G}(\R)$, $\pi(p_{1}*p) = p_{1}*\pi(p)$. 
\end{Lemma}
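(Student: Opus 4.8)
The plan is to unwind the definitions of the two multiplication operations $*$ (one on $S_G(\R)$ and the induced one on $S_V(\R)$) and show that the map $\pi$ intertwines them. Let $b$ realize $p$ in $G$, and let $g$ realize the unique coheir of $p_1$ over $M,b$, so that by definition $p_1 * p = \tp(gb/\R)$. Applying $\pi$ gives $\pi(p_1*p) = \tp(\pi(gb)/\R) = \tp((gb)/H \,/\, \R)$. On the other side, $\pi(p) = \tp(b/H \,/\,\R)$, i.e. $\pi(p)$ is the type of the coset $bH$, and by the definition of the action-product $p_1 * \pi(p)$ we take an element realizing $\pi(p)$ together with a realization of the coheir of $p_1$ over that element, and apply the $G$-action $\cdot$ on $V$.

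The key compatibility I would invoke is simply that the projection $\pi\colon G\to V$ is equivariant for the left $G$-action: $\pi(gb) = g\cdot \pi(b)$, which is just the statement that $(gb)H = g\cdot(bH)$, true by the very definition of the coset action. So at the level of realizations, $\pi(gb)$ and $g\cdot(bH)$ are literally the same element of $V$. The content of the lemma is therefore that the coheir used in forming $p_1*p$ projects correctly to the coheir used in forming $p_1*\pi(p)$.

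The one point requiring care — and what I expect to be the main (if modest) obstacle — is the coheir/independence bookkeeping: I must check that if $g$ realizes the coheir of $p_1$ over $M,b$, then $g$ also realizes the coheir of $p_1$ over $M, bH$, so that $gb$ realizes $p_1*p$ exactly as $g\cdot(bH)$ realizes $p_1 * \pi(p)$. This follows because $bH = \pi(b) \in \dcl(b)$, so $M, bH \subseteq \dcl(M,b)$, and a type finitely satisfiable in $M$ over the larger set $M,b$ restricts to one finitely satisfiable in $M$ over the smaller set $M,bH$; by uniqueness of coheirs over a model whose types are all definable (Lemma \ref{lem:heir}(i)), $\tp(g/M,bH)$ is the coheir of $p_1$ over $M,bH$. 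Hence $g$ is a legitimate witness for computing $p_1*\pi(p)$ using the realization $bH$ of $\pi(p)$.

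Putting this together, I would argue: $p_1 * \pi(p) = \tp(g\cdot(bH)/\R) = \tp(\pi(gb)/\R) = \pi\big(\tp(gb/\R)\big) = \pi(p_1*p)$, where the middle equality is the equivariance $\pi(gb)=g\cdot\pi(b)$ and the surrounding equalities are the definitions of the two products together with the fact that $\pi$ on type spaces is induced by $\pi$ on points. This is essentially a diagram chase once the coheir-restriction step is in place; no real combinatorics or forking computation beyond Lemma \ref{lem:heir} is needed.
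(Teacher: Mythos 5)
Your proof is correct and is essentially the same argument as the paper's: both rest on the pointwise equivariance $(gb)H = g\cdot(bH)$ of $\pi$, plus the observation that the witness realizing the coheir of $p_1$ over $\R,b$ also realizes the coheir of $p_1$ over $\R,bH$ (the paper states this restriction step without spelling out the $\dcl$/uniqueness justification you give, which is the right justification). Nothing further is needed.
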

\begin{proof} Fix $p, p_{1}\in G$. Then $p_{1}*p = \tp(g_{1}g/\R)$ where $g_{1}$ realizes $p_{1}$, $g$ realizes $p$ and $\tp(g_{1}/\R,g)$ is finitely satisfiable in $\R$. But then $\pi(p_{1}*p) = \tp((g_{1}g/H)/\R) = \tp(g_{1}\cdot(g/H)/\R)$.
Now $\tp(g_{1}/\R, g/H)$ is finitely satisfiable in $\R$ and $g/H$ realizes $\pi(p)$. Hence $\tp(g_{1}\cdot(g/H)/\R) = p_{1}*\pi(p)$, as required.
\end{proof}

 %Roughly: If $(\tp(a_{i}/M):i<\omega)$ (with $a_{i}\in G(\R)$) has limit $p_{1}$, then $p_{1}*p$ is limit of $a_{i}p$'s and $\pi(p_{1}*p)$ is limit of $a_{i}\pi(p)$'s. 

As above the symbol $g$ will range over elements of $G$. Also $h$ ranges over elements of $H$, and $t$ over elements of $T$. If $h = \left(\begin{array}{cc} b& c\\ 0&b^{-1}    \end{array}\right)$ is in $h$ we identify it with the pair  $(b,c) \in \R_{>0}\times \R$. And if $t = \left(\begin{array}{cc} x&-y\\ y&x    \end{array}\right)$ is an element of $T$ we identify it with the pair $(x,y)$  (so $T$ is identified with the unit circle under complex multiplication).\\

We now fix some canonical types: $p_{0} = \tp(b,c/\R)$ where $b$ is infinite and $c$ is infinite over $b$. It is easy to check that $p_{0}$ is \emph{left $H(\R)$-invariant}. Namely if $h$ realizes $p_{0}$ and $h_{1}\in H(\R)$, then $h_{1}h$ also realizes $p_{0}$.\\

Note that all nonalgebraic types (over $\R$) of elements of $T$ are generic in the sense of \cite{NIPII}. In fact $T$ is the simplest possible $fsg$ group in $RCF$. Let $q_{0} = \tp(x,y/\R)$  (as the type of an element of $T$) where $y$ is positive infinitesimal and $x>0$ (so $x$ is the positive square root of $1-y^{2}$). We call $q_{0}$ the type of a ``positive infinitesimal" of $T$: it is infinitesimally close to the identity, on the ``positive" side.  \\

Likewise, for any $t\in T(\R)$ and $t_{1}\in T$, we will say that $t_{1}$ is ``infinitesimally close, on the positive side" to $t$ if $t_{1}t^{-1}$ realizes $q_{0}$. \\

The bijection (homeomorphism) between $T$ and $V$ given by $\pi_{|T}$ induces a homeomorphism (still called $\pi$) between $S_{T}(\R)$ and $S_{V}(\R)$, so we will sometimes identify them below, although we distinguish between $q$ and $\pi(q)$ (for $q\in S_{T}(\R)$).

%a certain left $H(\R)$-invariant type: namely the matrix with $b,b^{-1}$ on the diagonal, $c$ in the top right and $0$ in bottom left, where $b$ realizes the type at infinity, and $c$ is infinitely large with respect to $b$. Also $q_{0} = \tp(d/M)$ where $d\in T$ is $>0$ and infinitesimal. Then $\pi(q_{0})$ denotes $\tp((d/H))/M)$ where $d$ realizes $q_{0}$. 

\begin{Definition} \label{def:type}  We define $r_{0}$ to be $\tp(th/\R)$ where $h\in H$ realizes $p_{0}$ and $t\in T$ realizes the unique coheir of $q_{0}$ over $\R,h$.
\end{Definition}

Note that $\pi(r_{0}) = \pi(q_{0})$. Our first aim is to show that $\cl(G(\R)r_{0}) = I$ is a minimal closed $G(\R)$-flow, and that $r_{0}$ is an idempotent. Note that $\cl(G(\R)r_{0})$ is precisely the set of $p*r_{0}\in S_{G}(\R)$ for $p$ ranging over $S_{G}(\R)$. Likewise for $\cl(G(\R)\cdot\pi(r_{0}))$.

\begin{Lemma} \label{lem:stab}
%\newline
%(i) $Stab_{G(\R)}\pi(q_{0}) = H(\R)$.
 For any $p\in S_{G}(\R)$, $p*\pi(q_{0}) = \pi(q_{0})$ if and only if $p$ is of the form $\tp(t_{1}h_{1}/\R)$ with $h_{1}\in H$ and $t_{1}\in T$ the identity or a realization of $q_{0}$.
\end{Lemma}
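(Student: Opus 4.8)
The plan is to realize $\pi(q_0)$ concretely and reduce everything to addition of angles on the circle $T \cong V$. Pick $t_0 = (x_0,y_0) \models q_0$, so $y_0$ is a positive infinitesimal, $x_0 > 0$, and $\pi(t_0) \models \pi(q_0)$; let $g \models p$ realize the coheir of $p$ over $\R, t_0$, so that $p * \pi(q_0) = \tp(g\cdot\pi(t_0)/\R)$ by definition. Decompose $g = t_1 h_1$ with $t_1 = (\cos\theta_1,\sin\theta_1)\in T$ and $h_1 = (\beta,\gamma)\in H$; note $\tp(t_1/\R)$ is exactly the $T$-part of $p$, so $p$ has the asserted form with this $t_1$. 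Writing the $T$-$H$ decomposition $h_1 t_0 = t' h'$, we get $g t_0 = t_1(h_1 t_0) = (t_1 t')h'$, so the $T$-part $t''$ of $g t_0$ is $t_1 t'$ and $g\cdot\pi(t_0) = \pi(t'')$, of angle $\theta'' = \theta_1 + \theta'$, where $\theta'$ is the angle of $t'$. Thus $p*\pi(q_0)$ is the type of the point at angle $\theta_1 + \theta'$, and, using that over $\R$ there is a unique type of a positive infinitesimal, the whole problem becomes: for which $p$ is $\theta_1 + \theta'$ a positive infinitesimal?

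First I would show that $\theta'$ is \emph{always} a positive infinitesimal, whatever $h_1$ is. The first column of $h_1 t_0$ is $(P,Q) = (\beta x_0 + \gamma y_0,\ \beta^{-1}y_0)$, so with $\lambda = \sqrt{P^2+Q^2}$ we have $\cos\theta' = P/\lambda$ and $\sin\theta' = Q/\lambda > 0$ (as $\beta,y_0>0$). Since $h_1$ is the $H$-part of $g$, the type $\tp(\beta,\gamma/\R,t_0)$ is finitely satisfiable in $\R$. For a standard $r>0$ the formula $\sin\theta'\ge r$ (in $\beta,\gamma$, with $t_0$ a parameter) has no solution with $\beta,\gamma\in\R$, because for real $\beta>0,\gamma$ and infinitesimal $y_0$ the numerator $\beta^{-1}y_0$ is infinitesimal while $\lambda\approx \beta x_0$; by finite satisfiability $\sin\theta'<r$, so $\sin\theta'$ is a positive infinitesimal, and the same device applied to $P\le 0$ gives $\cos\theta'>0$. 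Hence $\theta'$ is a positive infinitesimal. This estimate is one of the two technical cores of the argument.

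Now I run the case analysis on the angle $\theta_1$ (i.e.\ on the $T$-part $\tp(t_1/\R)$ of $p$). If $t_1 = I$, then $\theta'' = \theta'$ is a positive infinitesimal, so $p*\pi(q_0) = \pi(q_0)$. If $t_1 \models q_0$, then $\theta_1$ is a positive infinitesimal and $\theta'' = \theta_1 + \theta'$, a sum of two positive infinitesimals, is again one, so again $p*\pi(q_0) = \pi(q_0)$. If $\theta_1$ is \emph{not} infinitesimal (and $\ne 0$), then $\theta'' = \theta_1 + \theta'$ is infinitesimally close to the non-infinitesimal $\theta_1$, so $t''$ is bounded away from $I$ and $\tp(t''/\R)\ne q_0$; thus $p$ is not a stabilizer. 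These three cases are routine.

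The remaining, most delicate, case is $\theta_1$ a \emph{negative} infinitesimal ($t_1\ne I$, $t_1\not\models q_0$): here both $\theta_1$ and $\theta'$ are infinitesimal and I must show their sum is \emph{not} a positive infinitesimal, i.e.\ that $\sin\theta''<0$. Since $\sin\theta'' = \sin\theta_1\cos\theta' + \cos\theta_1\sin\theta' = (\sin\theta_1\,P + \cos\theta_1\,Q)/\lambda$, its sign is that of $-s\,P + \cos\theta_1\,\beta^{-1}y_0$, where $s = -\sin\theta_1 > 0$ and $P = \beta x_0 + \gamma y_0 > 0$; this is negative iff $s(\beta^2 x_0 + \beta\gamma y_0) > \cos\theta_1\, y_0$, i.e.\ iff $s\beta^2 x_0 > y_0(\cos\theta_1 - s\beta\gamma)$. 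The key input is that, by the heir--coheir duality of Lemma \ref{lem:heir}(iii), $\tp(t_0/\R,g)$ is the heir of $q_0$, so $y_0$ is a positive infinitesimal \emph{over $g$}, hence smaller than every positive element of $\dcl(g)$. Writing $w = \cos\theta_1 - s\beta\gamma \in \dcl(g)$: if $w\le 0$ the inequality is immediate, and if $w>0$ then $y_0 < s\beta^2/(2w)\in\dcl(g)$ gives $y_0 w < s\beta^2/2 < s\beta^2 x_0$, as required. Hence $\theta''$ is a negative infinitesimal and $\tp(t''/\R)\ne q_0$, so $p$ is not a stabilizer. Combining the four cases, $p*\pi(q_0)=\pi(q_0)$ holds precisely when $t_1 = I$ or $t_1\models q_0$, which is the assertion. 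The main obstacle throughout is the genericity bookkeeping for competing infinitesimals --- the magnitude estimate of Step~A and the sign determination in the negative-infinitesimal case --- both of which are forced by finite satisfiability of $g$ over $\R,t_0$ (equivalently, by $y_0$ being infinitesimal over $g$).
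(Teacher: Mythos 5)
Your proof is correct, and its skeleton matches the paper's: decompose the realization $g = t_1h_1$ of $p$, show that the $H$-part moves the infinitesimal point of $V \cong T$ only infinitesimally, and then reduce to rotation on the circle. But the technical core is implemented quite differently. The paper proves a \emph{stronger} statement about the $H$-part: transferring via $\pi_1$ to $\Pp^{1}$, where positive infinitesimals of $T$ become infinite elements of $K$ and $h_1 = (b,c)$ acts affinely by $x \mapsto b^{2}x + bc$, it shows that $h_1\cdot(t/H)$ realizes the \emph{heir} of $\pi(q_0)$ over $(\R,t_1,h_1)$ --- i.e., the image stays positive infinitesimal over the parameters $t_1,h_1$, not merely over $\R$. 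With that in hand the paper can assume $h_1 = 1$ without loss of generality, and the final circle step is immediate even in the delicate case: if $\theta_1$ is a negative infinitesimal, the rotation increment is positive infinitesimal \emph{over} $\dcl(\R,t_1)$, hence smaller than $|\theta_1|$, so the sum stays negative. Your Step A establishes only the weaker fact (the increment $\theta'$ is infinitesimal over $\R$), obtained by finite satisfiability plus explicit trigonometry inside $T$, and this is exactly why you are forced into the separate sign estimate when $\theta_1$ is a negative infinitesimal; there you correctly invoke Lemma \ref{lem:heir}(iii) to get $y_0$ infinitesimal over $\dcl(\R,g)$ and compare $y_0 w$ against $s\beta^{2}x_0$, which is the same heir-type ingredient the paper builds in from the outset. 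The trade-off: the paper's detour through $\Pp^{1}$ is shorter and makes the hard case vanish into a one-line observation; your route stays entirely inside $T$ with explicit matrix computations and heavier bookkeeping, but it is self-contained (no use of the projective action) and it isolates precisely where the coheir hypothesis on $g$ is indispensable.
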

\begin{proof} %(i) If $h\in H(\R)$, $h\cdot 1/H = 1/H$, so as $d/H$ has standard part $1/H$, $h\cdot (d/H)$ also has standard part $1/H$ and it is easy to check that in fact it realizes $\pi(q_{0})$. So $H(\R)$ stabilizes $\pi(q_{0})$ and it is easy to see that it is precisely the stabilizer.
Let $\tp(t_{1},h_{1}/\R,t)$ be finitely satisfiable in $\R$ with $h_{1}\in H$, $t_{1}\in T$ and $t$ realizing $q_{0}$  (so $t/H$ realizes $\pi(q_{0})$).   $\tp(t/\R,t_{1},h_{1})$ is the unique heir of $q_{0}$ over $(\R,t_{1},h_{1})$. In particular $t\in T$ is positive infinitesimal over $(\R,t_{1},h_{1})$  as is $t/H \in V$. Now $h_{1}\cdot (1/H) = 1/H$, hence clearly $h_{1}\cdot (t/H)$ is also infinitesimally close (over $M,d_{1},h_{1}$)  to $1/H$.

\begin{Claim}
$h_{1}\cdot (t/H)$ is on the ``positive" side of $1/H$ and realizes the unique heir of $\pi(q_{0})$ over $\R,t_{1},h_{1}$. 
\end{Claim}
\begin{proof}[Proof of Claim]
When we mention ``positive side" we are identifying $V$ and $T$. Now the map $\pi_{1}$ from $T$ to $\Pp^{1}$  is a ``local homeomorphism" taking the identity to $\left(\begin{array}{c} 1\\ 0    \end{array}\right)$ and taking positive infinitesimals in $T$ to  infinite $x\in K$ and (by definition) respects the action of $G$. Hence it suffices to show that for $h_{1} = (b,c)\in H$, and $x\in K$ infinite, such that $tp(h_{1}/\R,x)$ is finitely satisfiable in $\R$, then $h_{1}\cdot x  =  b^{2}x + bc$  is (positive) infinite over $\R,b,c$.  This is clear: Firstly, $x$ is infinite over $\R,b,c$. Now as $b^{2} > 0$, $b^{2}x$ is positive infinite over $\R,b,c$,  as is $b^{2}x+ bc$.
\end{proof}

By the claim $\tp(h_{1}\cdot (t/H)/\R, t_{1},h_{1}) = \tp((t/H)/\R, t_{1}, h_{1})$. So without loss of generality $h_{1} =1$.
So we are in the situation of $t,t_{1}\in T$, $t$ realizes $q_{0}$ and $\tp(t_{1}/\R,t)$ is finitely satisfiable over $\R$. It is then clear that 
$t_{1}t$ realizes $q_{0}$ if and only if  $t_{1}$ is the identity, or itself realizes $q_{0}$. As $t_{1}\cdot (t/H) = (t_{1}t/H)$, and by virtue of $\pi$ inducing a homeomorphism between $S_{T}(\R)$ and $S_{V}(\R)$, we see that $t_{1}\cdot(t/H)$ realizes $\pi(q_{0})$ if and only if $t_{1}$ is the identity or a realization of $q_{0}$. This proves the lemma.
%The end result is that we may assume $h_{1} = 1$. In any case we have $\tp(t/M,t_{1})$ is the unique heir of $q_{0}$, and so is infinitesimal over $M,t_{1}$. Also $t_{1}\cdot(t/H) = t_{1}t/H$, and clearly the only way that this can realize $\pi(q_{0})$ is if $t_{1}$ realizes $q_{0}$  (i.e. is positive infinitesimal in $T$). This proves the Lemma. 
\end{proof}

\begin{Corollary} \label{cor:q} Let $t\in T$ realize $q_{0}$ and let $h_{1}\in H$ be such that $\tp(h_{1}/\R,t)$ is finitely satisfiable in $\R$. Then $h_{1}t = t_{1}h_{2}$ for $t_{1}\in T$ realizing $q_{0}$. 
\end{Corollary}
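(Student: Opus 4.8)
The plan is to recognize this corollary as essentially a restatement, in the language of the decomposition $G = TH$, of what was already established in the proof of Lemma \ref{lem:stab}. Recall that in that proof, after reducing to the case $h_1 = 1$, the key computation showed that multiplying a realization $t$ of $q_0$ on the left by an element $t_1 \in T$ (with $\tp(t_1/\R,t)$ finitely satisfiable in $\R$) again gives a realization of $q_0$ precisely when $t_1$ is the identity or realizes $q_0$. Here I must handle the genuinely new feature: the left multiplier is an element $h_1 \in H$ rather than of $T$, so the product $h_1 t$ does not a priori lie in $T$ and must be rewritten via the unique factorization $h_1 t = t_1 h_2$ with $t_1 \in T$, $h_2 \in H$.

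First I would recall from Section \ref{sec:sl} that every element of $G$ factors uniquely as an element of $T$ times an element of $H$, so the factorization $h_1 t = t_1 h_2$ exists and is unique; by Remark \ref{rem:act} and the definition of the action $\cdot$ of $G$ on $T$, the element $t_1$ is exactly $h_1 \cdot t$. Thus the statement ``$t_1$ realizes $q_0$'' is equivalent to ``$h_1 \cdot t$ realizes $q_0$,'' i.e. (after identifying $T$ with $V$ via $\pi_{|T}$) to ``$h_1 \cdot (t/H)$ realizes $\pi(q_0)$.''

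The heart of the argument is then precisely the Claim proved inside Lemma \ref{lem:stab}: for $h_1 \in H$ and $t$ realizing $q_0$ with $\tp(h_1/\R,t)$ finitely satisfiable in $\R$, the element $h_1 \cdot (t/H)$ lies on the positive side of $1/H$ and realizes the (unique heir of the) type $\pi(q_0)$. That Claim was established by transporting the computation through the local homeomorphism $\pi_1 \colon T \to \Pp^1$, reducing to the observation that for $x \in K$ infinite and $h_1 = (b,c)$ with $b > 0$, the image $h_1 \cdot x = b^2 x + bc$ remains positive infinite over $\R,b,c$. Invoking that Claim directly gives that $h_1 \cdot t$ realizes $q_0$, which by the identification above means $t_1$ realizes $q_0$, as desired.

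The only point requiring care, and hence the main (if modest) obstacle, is the bookkeeping around finite satisfiability: I must check that the hypothesis $\tp(h_1/\R,t)$ finitely satisfiable in $\R$ is exactly the hypothesis under which the Claim was proved, so that its conclusion applies verbatim, and that translating between the $\cdot$-action on $T$ and the $\cdot$-action on $V$ respects types (which is guaranteed by $\pi_{|T}$ being a $G(\R)$-equivariant homeomorphism inducing the homeomorphism between $S_T(\R)$ and $S_V(\R)$). Once these identifications are in place the corollary follows immediately, with no new computation beyond what Lemma \ref{lem:stab} already contains.
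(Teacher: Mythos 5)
Your proposal is correct and follows essentially the same route as the paper: the paper's own proof is a one-line appeal to the first part (the Claim) of the proof of Lemma \ref{lem:stab}, exactly as you do, with your identification $t_{1} = h_{1}\cdot t$ and the translation between the actions on $T$ and $V$ being the same bookkeeping the paper leaves implicit.
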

\begin{proof} We have just seen in the first part of the proof of \ref{lem:stab} that $h_{1}\cdot (t/H)$ realizes $\pi(q_{0})$, which suffices.
\end{proof}

\begin{Lemma} \label{lem:q}  For any $p\in S_{G}(\R)$, $p*r_{0} = r_{0}$ if and only if $p = \tp(t_{1}h_{1}/\R)$ with $h_{1}\in H$ and $t_{1}\in T$ realizing $q_{0}$.
\end{Lemma}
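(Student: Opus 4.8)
My plan is to reduce everything first to the projective/torus picture via $\pi$, and then to control the Borel part using Corollary \ref{cor:q}. For the forward direction, suppose $p*r_{0}=r_{0}$. Applying $\pi$, using Lemma \ref{lem:p} and $\pi(r_{0})=\pi(q_{0})$, I get $\pi(q_{0})=\pi(p*r_{0})=p*\pi(q_{0})$, so by Lemma \ref{lem:stab} necessarily $p=\tp(t_{1}h_{1}/\R)$ with $h_{1}\in H$ and $t_{1}\in T$ either the identity or a realization of $q_{0}$. Thus the $V$-picture already pins $p$ down to this shape, and the only thing it cannot detect, hence the only thing left to do in this direction, is to exclude the case $t_{1}=$ identity; this I expect to be the main obstacle.

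For the converse, assume $p=\tp(t_{1}h_{1}/\R)$ with $t_{1}$ realizing $q_{0}$. I write $p*r_{0}=\tp(g\,th/\R)$, where $th$ realizes $r_{0}$ (so $h$ realizes $p_{0}$ and $t$ is the coheir of $q_{0}$ over $\R,h$) and $g=t_{1}h_{1}$ realizes the coheir of $p$ over $\R,t,h$. Finite satisfiability of $\tp(g/\R,t,h)$ in $\R$ gives in particular that $\tp(h_{1}/\R,t)$ is finitely satisfiable, so Corollary \ref{cor:q} applies and yields $h_{1}t=s h_{2}$ with $s\in T$ realizing $q_{0}$ and $h_{2}\in H$. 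Hence $g\,th=t_{1}s\,h_{2}h$, which is already in the form (element of $T$)(element of $H$). It then remains to identify this type with $r_{0}$, namely to verify two things: that $h_{2}h$ realizes $p_{0}$, and that $t_{1}s$ is the coheir of $q_{0}$ over $\R,h_{2}h$.

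The first point is a direct computation with the coordinate identifications $h=(b,c)$ and $h_{2}=(b_{2},c_{2})$: since $b$ is infinite and $c$ is infinite over $b$, even over the parameters $\R,t_{1},h_{1}$, the product $h_{2}h$ again has infinite first coordinate and second coordinate infinite over the first, so it realizes $p_{0}$. The second point is where the hypothesis $t_{1}\models q_{0}$ is used essentially: as $g$ is a coheir over $\R,t,h$, the angle of $t_{1}$ is a positive infinitesimal lying above every positive infinitesimal of $\dcl(\R,h_{1},t,h)$, and $h_{2}h\in\dcl(\R,h_{1},t,h)$; adding to it the $\dcl(\R,h_{1},t)$-definable infinitesimal angle of $s$ leaves the sum infinitesimal and still above all positive infinitesimals of $\dcl(\R,h_{2}h)$, which is exactly the coheir condition. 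By the defining clause of $r_{0}$ this forces $\tp(t_{1}s\,h_{2}h/\R)=r_{0}$.

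The hard part is the exclusion of $t_{1}=$ identity in the forward direction. Here $p=\tp(h_{1}/\R)$ and the same computation gives $p*r_{0}=\tp(s\,h_{2}h/\R)$, but now the torus-part $s$ is \emph{determined} by $h_{1}$ and $t$ rather than supplied by a fresh coheir, so its angle is no longer forced to sit on top. I plan to show that for nontrivial $h_{1}$ this angle fails to lie above all positive infinitesimals of $\dcl(\R,h_{2}h)$ -- concretely, $\dcl(\R,h_{2}h)$ already contains larger positive infinitesimals arising from roots of its infinite first coordinate -- so that $s$ is not the coheir of $q_{0}$ over $\R,h_{2}h$ and the resulting type differs from $r_{0}$. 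This amounts to a comparison of the orders of magnitude of the transformed torus-angle against the infinitesimals definable from the transformed Borel part, and getting this comparison exactly right (and uniform in $h_{1}$, correctly treating the degenerate $h_{1}$) is the step I expect to demand the most care.
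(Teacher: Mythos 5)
Your skeleton matches the paper's (forward direction via Lemma \ref{lem:p} and Lemma \ref{lem:stab}; converse via Corollary \ref{cor:q} followed by checking that the $H$-part realizes $p_{0}$ and the $T$-part realizes the coheir of $q_{0}$), but both steps you identify as the real work are genuinely broken. The worst problem is the forward direction: the case you plan to spend most of your effort excluding cannot be excluded, because it occurs. If $p$ is the type over $\R$ of the identity matrix, then trivially $p*r_{0}=r_{0}$, and the $T$-part of $p$ is the identity, not a realization of $q_{0}$. More generally, for \emph{any} $h_{1}\in H$, running the converse computation with $t_{1}$ the identity (Corollary \ref{cor:q} gives $h_{1}t=t_{2}h_{2}$ with $t_{2}\models q_{0}$, and the heir argument described below shows $t_{2}h_{2}h$ realizes $r_{0}$) yields $\tp(h_{1}/\R)*r_{0}=r_{0}$. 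So the set of $p$ with $p*r_{0}=r_{0}$ is exactly the set described in Lemma \ref{lem:stab}, identity case included --- as indeed it must be: by Lemma \ref{lem:p} and the injectivity of $\pi$ on $\cl(G(\R)r_{0})$ asserted in Corollary \ref{cor:homeo}, the stabilizers of $r_{0}$ and of $\pi(r_{0})=\pi(q_{0})$ have to coincide. You have correctly put your finger on a discrepancy that the paper glosses over (its one-line proof of this direction passes from ``the identity or a realization of $q_{0}$'' to ``the required form'' without comment), but the resolution goes the other way: the identity case must be \emph{added} to the right-hand side of the statement, and any order-of-magnitude argument purporting to rule it out necessarily contains an error, since what it tries to prove is false.

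The converse also has a genuine gap. Your verification of the coheir condition rests on the claim that, because $g=t_{1}h_{1}$ realizes a coheir over $(\R,t,h)$, the angle of $t_{1}$ lies above every positive infinitesimal of $\dcl(\R,h_{1},t,h)$. This does not follow: finite satisfiability of $\tp(t_{1},h_{1}/\R,t,h)$ in $\R$ constrains the pair $(t_{1},h_{1})$ only relative to $\dcl(\R,t,h)$, and says nothing about the position of $t_{1}$ relative to infinitesimals definable from $h_{1}$ itself. The lemma must cover types with a definable dependence between the two parts: for instance $p=\tp(t_{1}h_{1}/\R)$ where $h_{1}=(b_{1},0)$ with $b_{1}$ infinite and $t_{1}$ has angle $1/b_{1}^{2}$; here $1/b_{1}\in\dcl(\R,h_{1})$ is a positive infinitesimal larger than the angle of $t_{1}$, so your premise fails, yet the conclusion must still hold for this $p$. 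The paper never makes any such claim about the $T$-part; it works on the $H$-side: concatenating the two finite-satisfiability hypotheses gives that $\tp(t_{1},h_{1},t/\R,h)$ is finitely satisfiable in $\R$, whence by Lemma \ref{lem:heir}(iii) $\tp(h/\R,t_{1},h_{1},t)$ is the \emph{heir} $p_{0}|(\R,t_{1},h_{1},t)$; then definability and left $H(\R)$-invariance of $p_{0}$ (hence left $H(K')$-invariance of $p_{0}|K'$ for models $K'$) give $h_{3}=h_{2}h\models p_{0}|(\R,t_{1},t_{2},h_{2})$; finally Lemma \ref{lem:heir}(iii) is applied once more, in the reverse direction, to convert this heir statement into the desired one that $\tp(t_{1}t_{2}/\R,h_{3})$ is finitely satisfiable in $\R$. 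This double use of heir--coheir duality is the mechanism missing from your write-up; note that your step (a) (that $h_{2}h$ realizes $p_{0}$ ``even over the parameters $\R,t_{1},h_{1}$'') silently relies on the same mechanism, and moreover needs $t$ among the parameters, since $h_{2}\in\dcl(h_{1},t)$.
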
 
\begin{proof} If $p*r_{0} = r_{0}$ then by Lemma \ref{lem:p}, $p*\pi(r_{0}) = \pi(r_{0})$. As $\pi(r_{0}) = \pi(q_{0})$, by Lemma \ref{lem:stab}, $p$ is of the required form.

Now let $p = \tp(t_{1}h_{1}/M)$ with $t_{1}$ realizing $q_{0}$. Suppose  $th$ realizes $r_{0}$ and $\tp(t_{1},h_{1}/\R,t,h)$ is finitely satisfiable in $\R$. Note that (as $\tp(t/M,h)$ is finitely satisfiable in $\R$) we have that $\tp(t_{1},h_{1},t/M,h)$ is finitely satisfiable in $\R$, so by Lemma \ref{lem:heir}$(iii)$,  $\tp(h/M,t_{1},h_{1},t)$ realizes $p_{0}|(M,t_{1},h_{1},t)$. Now by Corollary \ref{cor:q}, $h_{1}t = t_{2}h_{2}$ for $t_{2}$ realizing $q_{0}$ and $h_{2}\in H$. Note that we still have $\tp(h/M,t_{1},t_{2},h_{2}) = p_{0}|(M,t_{1},t_{2},h_{2})$. Now $p_{0}$ is a (definable) left $H(\R)$-invariant type of $H$,  so for any  model $K'\supset \R$, $p_{0}|K'$ is also a left $H(K')$-invariant type of $H$. Hence $h_{3} = h_{2}h$ realizes $p_{0}|(\R,t_{1},t_{2},h_{2})$.

Now $t_{1}h_{1}th = t_{1}t_{2}h_{3}$. As $t_{1},t_{2}$ both realize $q_{0}$ so does their product $t_{1}t_{2}$ and we have just seen that $\tp(t_{1}t_{2}/\R,h_{3})$ is the unique coheir over $(\R,h_{3})$ of $q_{0}$. So  $t_{1}t_{2}h_{3}$ realizes $r_{0}$ as required.
\end{proof} 

From Lemmas \ref{lem:stab} and \ref{lem:q} we conclude easily:

\begin{Corollary} \label{cor:homeo} The restriction of $\pi\colon S_{G}(\R) \to S_{V}(\R)$ to $\cl(G(\R)r_{0})$ is a homeomorphism betweeen $\cl(G(\R)r_{0})$ and $\cl(G(\R)\cdot\pi(r_{0}))$
\end{Corollary}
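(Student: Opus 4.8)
The plan is to show that the restriction of $\pi$ to $I := \cl(G(\R)r_{0})$ is a continuous bijection onto $\cl(G(\R)\cdot\pi(r_{0}))$. Since $I$ is closed in the compact space $S_{G}(\R)$, it is compact, and $S_{V}(\R)$ is Hausdorff; a continuous injection from a compact space to a Hausdorff space is automatically a homeomorphism onto its image. So the only real content is the identification of the image (i.e. surjectivity) together with injectivity.

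The identification of the image is formal. As $\pi$ is continuous and $I$ is compact, $\pi(I)$ is compact and hence closed in $S_{V}(\R)$; by the equivariance of $\pi$ (Lemma \ref{lem:inv}) it contains $\pi(G(\R)r_{0}) = G(\R)\cdot\pi(r_{0})$, hence it contains $\cl(G(\R)\cdot\pi(r_{0}))$, and the reverse inclusion is immediate from continuity. (The same argument shows $\pi(I)$ is itself a minimal $G(\R)$-flow.) So everything reduces to proving that $\pi|_{I}$ is injective.

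The first step toward injectivity is to compute the fibre over $\pi(r_{0}) = \pi(q_{0})$. Any $a\in I$ has the form $a = p*r_{0}$, and by Lemma \ref{lem:p}, $\pi(a) = p*\pi(r_{0}) = p*\pi(q_{0})$; so $\pi(a)=\pi(r_{0})$ forces $p*\pi(q_{0}) = \pi(q_{0})$. By Lemma \ref{lem:stab}, $p = \tp(t_{1}h_{1}/\R)$ with $h_{1}\in H$ and $t_{1}$ either the identity or a realization of $q_{0}$, and in either case $p*r_{0} = r_{0}$ (for $t_{1}\models q_{0}$ this is Lemma \ref{lem:q}; the case $t_{1}=1$, i.e. $p=\tp(h_{1}/\R)$ with $h_{1}\in H$, is the special case $g\in H$ of the canonical-form computation below). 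Hence $\pi^{-1}(\pi(r_{0}))\cap I = \{r_{0}\}$, and by equivariance $\pi^{-1}(g\cdot\pi(r_{0}))\cap I = \{g r_{0}\}$ for every $g\in G(\R)$; thus $\pi|_{I}$ is injective over the dense orbit $G(\R)\cdot\pi(r_{0})$.

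The main obstacle is upgrading injectivity over this dense orbit to injectivity on all of $I$: an equivariant factor map between minimal flows can be one-to-one over a dense orbit and still fail to be injective over limit points, so the singleton-fibre computation is not by itself enough. To close the gap I would show that every $a\in I$ is recovered from $\pi(a)$ by a fixed ``canonical form''. Writing $a = p*r_{0} = \tp(g\cdot(th)/\R)$ with $th\models r_{0}$ and $\tp(g/\R,t,h)$ finitely satisfiable in $\R$, and decomposing $gt = t'h'$ with $t' = g\cdot t\in T$ and $h'\in H$, one has $a = \tp(t'(h'h)/\R)$. Using Lemma \ref{lem:heir}(iii) to get $h\models p_{0}|(\R,t',h')$, and then the left $H$-invariance of $p_{0}$ and of its heirs (exactly as in the proof of Lemma \ref{lem:q}), one obtains $h'h\models p_{0}|(\R,t')$; a second application of Lemma \ref{lem:heir}(iii) shows $\tp(t'/\R,h'h)$ is the coheir of $\tp(t'/\R)$. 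Thus $a$ is precisely the type assembled from its torus part $t'$ and a generic element of $H$ over $t'$, so $a$ is determined by $\tp(t'/\R)=\pi(a)$. This gives injectivity of $\pi|_{I}$ outright and, with the first two paragraphs, completes the proof.
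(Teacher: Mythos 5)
Your proof is correct, and it is in fact more careful than the paper's own argument, which consists of the single line ``From Lemmas \ref{lem:stab} and \ref{lem:q} we conclude easily''. The route the paper intends is exactly your first three paragraphs: surjectivity via Lemma \ref{lem:p} and the description of the two orbit closures as $\{p*r_{0}: p\in S_{G}(\R)\}$ and $\{p*\pi(r_{0}): p\in S_{G}(\R)\}$, and injectivity from the fact that Lemmas \ref{lem:stab} and \ref{lem:q} identify the same set of types stabilizing $\pi(q_{0})$ and $r_{0}$. You correctly spot the two weaknesses in that deduction. First, the literal statement of Lemma \ref{lem:q} omits the case $t_{1}=1$ (for instance $p=\tp(h_{1}/\R)$ with $h_{1}\in H$, or $p$ the type of the identity, satisfies $p*r_{0}=r_{0}$ but is not of the stated form), so the two stabilizers only coincide after this case is patched in, which you do. Second, and more substantively, singleton fibres over the point $\pi(q_{0})$, and hence over its $G(\R)$-orbit, do not formally yield injectivity on the whole closure: since $(S_{G}(\R),*)$ is only a semigroup one cannot translate a general fibre back to the base fibre, and an equivariant factor map between flows can indeed be one-to-one over a dense orbit while failing to be injective at limit points. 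Your canonical-form computation --- every $a=p*r_{0}\in I$ equals $\tp(t'(h'h)/\R)$ with $h'h\models p_{0}|(\R,t')$ and $\tp(t'/\R,h'h)$ the coheir, so that $a$ is recovered from $\pi(a)$ by an explicit section of $\pi$ --- is precisely what is needed, and it is really the computation inside the paper's proof of Lemma \ref{lem:q} rerun for an arbitrary $p$ rather than only for $p$ in the stabilizer; in this sense your proof makes rigorous (and slightly corrects) what the paper compresses into one sentence. The only blemish is your parenthetical remark that ``the same argument shows $\pi(I)$ is itself a minimal $G(\R)$-flow'': the argument there shows only that $\pi(I)$ is a closed invariant orbit closure, and its minimality requires Lemma \ref{lem:uniq}; but this remark is never used, so it does not affect the proof.
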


\begin{Lemma} \label{lem:uniq} The set $S_{V,na}(\R)$ of nonalgebraic types in $S_{V}(\R)$ is the unique minimal closed $G(\R)$-invariant subset of $S_{V}(\R)$.   
\end{Lemma}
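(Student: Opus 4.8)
The plan is to first describe $S_{V,na}(\R)$ completely and concretely, exploiting the geometry of $V$. Recall that $\pi_{|T}$ identifies $V(\R)$ with the unit circle $T(\R)$, which is bounded; hence every point of $V(K)$ has coordinates in $\Fin(K)$, and (using $x^{2}+y^{2}=1$ and continuity of $\st$) is infinitesimally close to a unique standard point of $V(\R)$. Since $\R$ is Dedekind complete, working in a local chart $\cong$ an interval of $\R$ shows that the only nonalgebraic $1$-types over $\R$ are, for each standard $v\in V(\R)$, the two types $v^{+}$ and $v^{-}$ of an element infinitesimally close to $v$ from either side. Thus $S_{V,na}(\R)=\{v^{+},v^{-}:v\in V(\R)\}$. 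I would then record two easy facts: each algebraic type $\tp(v/\R)$ is isolated (by the definable set $x=v$), so $S_{V,na}(\R)$, being its complement, is closed; and for standard $g\in G(\R)$ the map $b\mapsto g\cdot b$ is a standard continuous semialgebraic map, so $\st(g\cdot b)=g\cdot\st(b)$, whence $g$ sends algebraic types to algebraic types and nonalgebraic to nonalgebraic. Therefore $S_{V,na}(\R)$ is a closed $G(\R)$-invariant set, i.e.\ a subflow.

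Next I would prove minimality, which is the technical heart. Since $V=G/H$, the group $G(\R)$ acts transitively on $V(\R)$; and as $G(\R)=\SL(2,\R)$ is connected, its image in the homeomorphism group of the circle $V(\R)$ consists of orientation-preserving maps, so $g\cdot(v^{+})=(g\cdot v)^{+}$ and likewise for $v^{-}$. By transitivity the orbit of any single type $v_{0}^{+}$ is therefore the whole family $\{v^{+}:v\in V(\R)\}$. I would then run an accumulation argument in a chart: taking standard points $v$ with $v\to w$ from below, the types $v^{+}$ converge to $w^{-}$, while $v\to w$ from above gives $v^{+}\to w^{+}$; hence the closure of $\{v^{+}\}$ contains every $w^{\pm}$, and (since no net of nonalgebraic types can enter the isolated neighbourhood $\{x=w\}$ of an algebraic type) stays inside $S_{V,na}(\R)$. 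Thus $\cl(G(\R)v_{0}^{+})=S_{V,na}(\R)$. The symmetric argument handles the types $v^{-}$, so every nonalgebraic type has dense orbit and $S_{V,na}(\R)$ is a minimal flow.

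Finally I would deduce uniqueness. Let $J$ be any minimal closed $G(\R)$-invariant subset. If $J$ contained an algebraic type $\tp(v/\R)$, then by transitivity its orbit is the set of all algebraic types; since every nonempty definable subset of $V$ over $\R$ is realized in $V(\R)$, the algebraic types are dense in $S_{V}(\R)$, forcing $J=S_{V}(\R)$. But $S_{V,na}(\R)$ is then a proper nonempty closed invariant subset of $J$, contradicting minimality. Hence $J\subseteq S_{V,na}(\R)$, and as $S_{V,na}(\R)$ is itself minimal, $J=S_{V,na}(\R)$.

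The step I expect to be the genuine obstacle is the accumulation computation establishing that the orbit closure of one nonalgebraic type recovers \emph{both} one-sided types at every standard point; everything else is bookkeeping, but this step is where the orientation-preserving nature of the action and the Dedekind completeness of $\R$ (ruling out any further "irrational-cut" types) must be combined carefully.
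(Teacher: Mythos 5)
Your proof is correct, but it takes a more hands-on route than the paper, and the comparison is instructive. The paper's trick is to prove minimality with respect to the \emph{subgroup} $T(\R)$ alone: identifying $S_{V}(\R)$ with $S_{T}(\R)$ via $\pi$, it notes that any closed $G(\R)$-invariant set is in particular $T(\R)$-invariant, so it suffices that the nonalgebraic types form a minimal closed $T(\R)$-set. Under rotations this is almost immediate: by $o$-minimality every basic open $U\subseteq S_{T}(\R)$ that is not a finite set of isolated points contains an ``interval'' of types with endpoints in $T(\R)$, and any nonalgebraic type (being infinitesimally close to some standard point, by boundedness of the circle and completeness of $\R$) can be rotated by a standard element into that interval. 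This bypasses exactly the two ingredients you flagged as the technical heart: the orientation-preservation lemma ($g\cdot(v^{+})=(g\cdot v)^{+}$, via connectedness of $\SL(2,\R)$) and the accumulation computation $v^{+}\to w^{-}$, $v^{+}\to w^{+}$ --- with rotations one never needs to track which one-sided type lands where, only that \emph{some} translate enters every infinite open set. What your argument buys in exchange is an explicit picture of the flow: the identification $S_{V,na}(\R)=\{v^{+},v^{-}:v\in V(\R)\}$ with its limit structure, and a fully spelled-out uniqueness step (density of the algebraic types forces any minimal set to avoid them), which the paper compresses to ``uniqueness is clear.'' Both proofs ultimately rest on the same two facts --- the circle is bounded and $\R$ is Dedekind complete, so nonalgebraic types are exactly ``standard point plus side'' --- but the paper's restriction to the rotation subgroup is the cleaner mechanism, and is worth internalizing: minimality under a subgroup implies minimality under the whole group.
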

\begin{proof} Let for now $S$ denote the set of nonalgebraic types in $S_{V}(\R)$, a closed subspace. It is obviously $G(\R)$-invariant. To show minimality it is enough, to note that, identifying (via $\pi$) $S$ with the space of nonalgebraic types in $S_{T}(\R)$, it is  minimal closed $T(\R)$-invariant. If $U$ is a basic open subset of $S_{T}(\R)$ which is not a finite set (of isolated points), then by $o$-minimality, $U$ contains an ``interval" in $S_{T}(\R)$, namely the set of types containing a formula defining an interval, with endpoints in $T(\R)$, in $T$ with respect to the circular ordering on $T$. But clearly if $s\in S$ then for some $g\in T(\R)$, $gs\in U$. So $S$ contains no proper $T(\R)$-invariant closed subset, whereby $S$ is minimal closed $T(\R)$-invariant, as required. Uniqueness is clear.  
\end{proof}

%Note that the closure of the $G(\R)$ orbit of $r_{0}$ is precisely $\{p*r_{0}: p\in S_{G}(M)\}$. Likewise the closure of the $G(\R)$-orbit of  $\pi(q_{0}) = \pi(r_{0})$ is  $\{p*\pi(q_{0}):p\in S_{G}(M)\}$. By the above lemma the latter coincides with 
%$S_{\Pp^{1},na}(M)$.  Note then (also by the Lemma) that the map from $\cl(G(\R)\cdot r_{0})$ to $S_{\Pp^{1},na}(M)$ is surjective. By Lemmas 3.5 and 3.7 it is a homeomorphism (of $G(\R)$-flows.  Hence by Lemma 3.8 together with Lemmas... we deduce.
From \ref{cor:homeo} and \ref{lem:uniq} we deduce:

\begin{Proposition} \label{prop:min} %The surjective map from $\cl(G(\R)\cdot r_{0})$ to $S_{\Pp^{1},na}(M)$ induced by $\pi$ is a bijection (hence homeomorphism). So by the above lemma, 
$I =  \cl(G(\R)r_{0})$ is a minimal $G(\R)$-invariant closed subspace of $S_{G}(\R)$ and is homeomorphic as a $G(\R)$-flow to  $S_{V,na}(\R)$ under $\pi$.
\end{Proposition}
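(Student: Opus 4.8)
The plan is to assemble Proposition~\ref{prop:min} directly from the two ingredients stated just before it, namely Corollary~\ref{cor:homeo} and Lemma~\ref{lem:uniq}. The key observation is that $\pi$ restricted to $\cl(G(\R)r_{0})$ is a $G(\R)$-equivariant homeomorphism onto $\cl(G(\R)\cdot\pi(r_{0}))$, so minimality and flow-structure can be transported back and forth along $\pi$. Thus the whole argument reduces to identifying the target $\cl(G(\R)\cdot\pi(r_{0}))$ with the known minimal flow $S_{V,na}(\R)$.

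First I would record that $I = \cl(G(\R)r_{0})$ is by construction a closed $G(\R)$-invariant subset of $S_{G}(\R)$: it is the closure of a single orbit, hence closed and $G(\R)$-invariant (equivalently, by Lemma~1.5(ii), a minimal left ideal once minimality is shown). Next I would compute $\cl(G(\R)\cdot\pi(r_{0}))$. Since $\pi(r_{0}) = \pi(q_{0})$ and $q_{0}$ is a nonalgebraic type of $T$, its image $\pi(q_{0})$ lies in $S_{V,na}(\R)$; as $S_{V,na}(\R)$ is closed and $G(\R)$-invariant, it contains the whole orbit closure $\cl(G(\R)\cdot\pi(r_{0}))$. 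Conversely, by Lemma~\ref{lem:uniq} the space $S_{V,na}(\R)$ is the \emph{unique minimal} closed $G(\R)$-invariant subset of $S_{V}(\R)$, so any nonempty closed $G(\R)$-invariant subset of it must be all of it (a minimal flow has no proper nonempty closed invariant subset). Since $\cl(G(\R)\cdot\pi(r_{0}))$ is precisely such a subset and is nonempty, I conclude $\cl(G(\R)\cdot\pi(r_{0})) = S_{V,na}(\R)$.

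Then I would invoke Corollary~\ref{cor:homeo}: $\pi$ restricts to a homeomorphism between $I = \cl(G(\R)r_{0})$ and $\cl(G(\R)\cdot\pi(r_{0})) = S_{V,na}(\R)$, and by Lemma~\ref{lem:inv} this homeomorphism is $G(\R)$-equivariant, so it is an isomorphism of $G(\R)$-flows. Finally, minimality of $I$ follows by transport: a closed invariant subset of $I$ would map under this equivariant homeomorphism to a closed invariant subset of $S_{V,na}(\R)$, which by minimality of the latter is empty or everything; pulling back, the same holds for $I$, so $I$ is minimal. This gives both assertions of the proposition.

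The only genuine subtlety — really the one place where care is needed rather than a hard obstacle — is the equivariance of the homeomorphism, i.e. checking that the $\pi$ from Corollary~\ref{cor:homeo} intertwines the $G(\R)$-actions. This is exactly the content of Lemma~\ref{lem:inv} ($\pi(gp) = g\cdot\pi(p)$), so it is already in hand; the work is simply to state that the bijection of Corollary~\ref{cor:homeo}, being a restriction of this same $\pi$, automatically commutes with the action and hence promotes a topological homeomorphism to a flow isomorphism. Everything else is a formal consequence of ``a continuous equivariant bijection onto a minimal flow is itself minimal,'' so I expect no computational difficulty beyond bookkeeping.
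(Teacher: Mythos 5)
Your proposal is correct and follows exactly the paper's route: the paper derives Proposition~\ref{prop:min} precisely as a consequence of Corollary~\ref{cor:homeo} and Lemma~\ref{lem:uniq} (its ``proof'' is just the phrase ``From \ref{cor:homeo} and \ref{lem:uniq} we deduce''), and your write-up correctly fills in the implicit steps — identifying $\cl(G(\R)\cdot\pi(r_{0}))$ with $S_{V,na}(\R)$ via minimality, and transporting minimality back along the $G(\R)$-equivariant homeomorphism given by Lemma~\ref{lem:inv} and Corollary~\ref{cor:homeo}.
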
 
%\begin{proof} This is the main point. It suffices to prove that for $p\in S_{G}(M)$, $p*r_{0} = r_{0}$ if and only if $p*q_{0} = q_{0}$. Left to right is by Lemma 3.3 above. So we try to prove the other direction. Suppose that $p*q_{0} = q_{0}$. So for $d$ realizing $q_{0}$ and $d'h'$ whose type over $M,d$ is f.s in $M$ we have $d'h'd$ realizes $q_{0}$. It not hard to see that $d'$ has to realize $q_{0}$. So $p*q_{0} = q_{0}$ if and only if $p = \tp(d'h'/M)$ with $h'\in H$ and $d'$ realizing $q_{0}$.
%\newline
%Now let $hd$ realize $r_{0}$ and $d'h'$ realize such a $p$ such that $\tp(d'h'/M,hd)$ is finitely satisfiable in $M$. We have pointed out above that $hd = d_{1}h_{1}$ with $d_{1}$ realizing $q_{0}$ over $M$ and $\tp(d_{1}/M,h_{1})$ finitely satisfiable in $M$, hence $\tp(h_{1}/M,d_{1})$ is definable over $M$. Let $\phi(x)$ be a formula over $M$ in $r_{0}$. We want to show that $d'h'hd$ satisfies $\phi(x)$. Suppose not. So there is $d''h''in G(\R)$so that $\models\neg\phi(d''h''hd)$. But we know that $\tp(h''hd/M) = r_{0}$. So we have $\models\neg(d''hd)$, i.e. $\models\neg(\phi(d''d_{1}h_{1})$. 
%But as $\tp(h_{1}/M,d_{1})$ is definable over $M$, for $d''\in T(\R)$ sufficiently close to the identity, $\tp(d''d_{1}/M)$ is sufficiently close to $q_{0}$ to ensure that $\phi(d''d_{1}h_{1})$ holds and we can choose $d''$ like that. (Contradiction, OK.)  
%\newline
%Note we really need finite satisfiability in $M$ of $\tp(d_{1}/M,h_{1})$ (rather than the other way around) to make this argument work.
%\end{proof}

\begin{Lemma} \label{lem:idem}  $r_{0}*r_{0} = r_{0}$.  Namely $r_{0}$ is an idempotent in $I$. 

\end{Lemma}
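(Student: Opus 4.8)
The plan is to read off idempotency directly from the stabilizer computation in Lemma \ref{lem:q}, rather than computing $r_0 * r_0$ from scratch. That lemma characterizes the set $\{p \in S_G(\R) : p * r_0 = r_0\}$ as exactly those $p$ of the form $\tp(t_1 h_1/\R)$ with $h_1 \in H$ and $t_1 \in T$ a realization of $q_0$. Since the desired equality $r_0 * r_0 = r_0$ is precisely the assertion that $p = r_0$ lies in this stabilizer, I would reduce the whole statement to checking that $r_0$ itself has the indicated form.

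This last check is immediate from Definition \ref{def:type}: by construction $r_0 = \tp(th/\R)$, where $h \in H$ realizes $p_0$ and $t \in T$ realizes the unique coheir of $q_0$ over $\R, h$. In particular $t$ realizes $q_0$, so taking $t_1 = t$ and $h_1 = h$ exhibits $r_0$ as $\tp(t_1 h_1/\R)$ with $t_1$ realizing $q_0$ and $h_1 \in H$. Hence $r_0$ satisfies the criterion of Lemma \ref{lem:q} applied with $p = r_0$, and we conclude $r_0 * r_0 = r_0$.

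The genuine content has therefore already been absorbed into Lemma \ref{lem:q} (and, through it, into Lemma \ref{lem:stab} and Corollary \ref{cor:q}), so I expect no real obstacle to remain at this step. The only point worth flagging is the harmless observation that ``realizing the unique coheir of $q_0$'' entails ``realizing $q_0$,'' which is what makes the membership condition of Lemma \ref{lem:q} literally applicable to $r_0$; everything else is a direct substitution.
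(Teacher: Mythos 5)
Your proposal is correct and is exactly the paper's argument: the paper's proof of Lemma \ref{lem:idem} is simply ``this is a special case of Lemma \ref{lem:q},'' i.e., the observation that $r_{0} = \tp(th/\R)$ with $t$ realizing $q_{0}$ (being a realization of its coheir) and $h\in H$, so the ``if'' direction of Lemma \ref{lem:q} applies with $p = r_{0}$. Your write-up just makes explicit the substitution the paper leaves implicit.
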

\begin{proof} This is special case of Lemma \ref{lem:q}. 
\end{proof}

So we have so far accomplished the first aim: description of a minimal (closed) subflow $I$ of $S_{G}(\R)$ and an idempotent $r_{0}\in I$. 
We now want to describe the ``ideal group" $r_{0}*I$.  Note first:

\begin{Lemma} \label{lem:bij} The restriction of $\pi$ to $r_{0}*I$ is a bijection with $r_{0}*S_{V,na}(M)$

\end{Lemma}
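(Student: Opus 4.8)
The plan is to exploit Corollary \ref{cor:homeo}, which already establishes that $\pi$ restricted to $I = \cl(G(\R)r_{0})$ is a homeomorphism onto $\cl(G(\R)\cdot\pi(r_{0})) = S_{V,na}(\R)$ (using Proposition \ref{prop:min}). Since $r_{0}*I \subseteq I$, the restriction of $\pi$ to $r_{0}*I$ is automatically injective, being a restriction of an injective map. So the only real content is to identify the image $\pi(r_{0}*I)$ and check it equals $r_{0}*S_{V,na}(\R)$, where the latter product is the analogue of $*$ for the $G$-action on $V$ (i.e. $r_{0}*q$ for $q \in S_{V}(\R)$ should be read via Definition of $p_{1}*q$, with $r_{0}$ acting through its realization in $G$).

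First I would compute $\pi(r_{0}*I)$ using Lemma \ref{lem:p}, which gives the compatibility $\pi(r_{0}*p) = r_{0}*\pi(p)$ for every $p \in S_{G}(\R)$. As $p$ ranges over $I$, Proposition \ref{prop:min} tells us $\pi(p)$ ranges over all of $S_{V,na}(\R)$. Hence
\[
\pi(r_{0}*I) = \{r_{0}*\pi(p) : p \in I\} = \{r_{0}*q : q \in S_{V,na}(\R)\} = r_{0}*S_{V,na}(\R).
\]
This shows $\pi$ maps $r_{0}*I$ onto $r_{0}*S_{V,na}(\R)$. Combined with injectivity from Corollary \ref{cor:homeo}, this yields the desired bijection.

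The one point requiring care is that the formula $\pi(r_{0}*p) = r_{0}*\pi(p)$ is literally Lemma \ref{lem:p} only when we set $p_{1} = r_{0}$; I would simply cite Lemma \ref{lem:p} with this choice, noting that $r_{0} \in S_{G}(\R)$ so it is a legitimate first argument. I also want to make sure the elements of $r_{0}*I$ are genuinely a subset of $I$ so that Corollary \ref{cor:homeo} applies to them: this holds because $I$ is a left ideal (subflow) under $*$, so $r_{0}*I \subseteq I$. Thus the injectivity is inherited. The main obstacle, such as it is, is purely bookkeeping: making sure the two meanings of $*$ (on $S_{G}(\R)$ and the mixed action giving $r_{0}*q$ for $q \in S_{V}(\R)$) are correctly matched up by Lemma \ref{lem:p}, and that $r_{0}*S_{V,na}(\R)$ is interpreted as $\{r_{0}*q : q \in S_{V,na}(\R)\}$. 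There is no genuine difficulty beyond invoking the homeomorphism of Corollary \ref{cor:homeo} together with the intertwining identity of Lemma \ref{lem:p}, so the proof should be short.
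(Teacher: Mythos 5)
Your proof is correct and is essentially the paper's own argument: the paper's proof is just the citation ``By \ref{lem:p} and \ref{prop:min},'' and your write-up supplies exactly the details those citations compress (injectivity of $\pi$ on $r_{0}*I\subseteq I$ from the homeomorphism of Corollary \ref{cor:homeo}/Proposition \ref{prop:min}, and surjectivity onto $r_{0}*S_{V,na}(\R)$ from the intertwining identity of Lemma \ref{lem:p} with $p_{1}=r_{0}$, using $\pi(I)=S_{V,na}(\R)$).
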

\begin{proof} By \ref{lem:p} and \ref{prop:min}.
\end{proof}

We first consider the action of $H<G$ on $\Pp^{1}$ from Section \ref{sec:sl}. Note that $H$ fixes $ \left(\begin{array}{c} 1\\ 0    \end{array}\right)$.  We identify any other element  $\left(\begin{array}{c} x\\ 1    \end{array}\right)$ of $\Pp^{1}$ with $x\in K$.  With this notation:

\begin{Lemma} \label{lem:inf} Let $x\in \Pp^{1}$, let $h$ realize $p_{0}$ such that $\tp(h/\R,x)$ is finitely satisfiable in $\R$. Then $h\cdot x$ is positive infinite or negative infinite  (in particular infinitesimally close to $\left(\begin{array}{c} 1\\ 0    \end{array}\right)$ in $\Pp^{1}$).   
\end{Lemma}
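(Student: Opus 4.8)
The plan is to reduce the statement to an explicit computation with the M\"obius action, exactly as the setup of the lemma invites. Recall that $H$ consists of matrices $\left(\begin{array}{cc} b& c\\ 0&b^{-1}\end{array}\right)$ with $b\in\R_{>0}$ and $c\in\R$, identified with pairs $(b,c)$, and that $p_0=\tp(b,c/\R)$ where $b$ is infinite and $c$ is infinite over $b$. Under the identification of $\left(\begin{array}{c} x\\ 1\end{array}\right)$ with $x\in K$, the action of $h=(b,c)$ on such a point is $h\cdot x = \frac{bx+c}{b^{-1}} = b^2 x + bc$, which is precisely the formula that already appeared inside the proof of the Claim in Lemma \ref{lem:stab}. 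So the heart of the matter is to show that $b^2 x + bc$ is positive infinite or negative infinite over $\R$, given that $\tp(h/\R,x)$ is finitely satisfiable in $\R$.

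First I would dispose of the special point: if $x=\left(\begin{array}{c} 1\\ 0\end{array}\right)$ then $h\cdot x = x$, which is already the point at infinity, and we are done. So assume $x$ is (the image of) some element of $K$. The key structural fact I would use is that the finite satisfiability of $\tp(h/\R,x)$ in $\R$ forces $h=(b,c)$ to be ``generic over $x$'' in the sense of $p_0$: concretely, $b$ is infinite over $\R,x$ and $c$ is infinite over $\R,b,x$. I would extract this by applying Lemma \ref{lem:heir}(ii)--(iii), noting that $p_0$ is a definable type, so realizing its coheir over $(\R,x)$ means $(b,c)$ satisfies the defining schema of $p_0$ over $(\R,x)$; in particular $b>\dcl(\R,x)$ and $c>\dcl(\R,b,x)$. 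This is the crucial step, and the one where I would be most careful, since the whole argument rests on transferring the ``infinite/infinite over'' conditions in the definition of $p_0$ from being over $\R$ to being over $\R,x$, via finite satisfiability.

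Granting that, the computation is routine case analysis on the magnitude of $x$. Since $b$ is positive and infinite over $\R,x$, the term $b^2x$ dominates: if $x$ is positive and not infinitesimal (i.e.\ $|x|$ bounded below by a positive real, or $x$ itself infinite), then $b^2x$ is positive infinite over $\R,b,x$, while $|bc|$ — although large — is controlled because $c$ being infinite \emph{over $b$} makes the comparison delicate; here I would instead observe directly that $c$ is infinite over $\R,b,x$, so $bc$ is infinite over $\R,b,x$ as well, and then compare $b^2x$ with $bc$. The cleaner route is to factor $b^2x+bc = b(bx+c)$ and argue on $bx+c$: since $c$ is infinite over $\R,b,x$, we have $c>|bx|$ (as $bx\in\dcl(\R,b,x)$), so $bx+c$ has the same sign as $c$, namely positive infinite over $\R,b,x$; multiplying by the positive infinite $b$ keeps it positive infinite. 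Thus $h\cdot x$ is positive infinite over $\R$ (a fortiori over $\R$), hence in $\Pp^1$ infinitesimally close to $\left(\begin{array}{c} 1\\ 0\end{array}\right)$.

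I expect the main obstacle to be purely bookkeeping: making sure the genericity conditions ``infinite'' and ``infinite over $b$'' in $p_0$ correctly upgrade, under the coheir/finite-satisfiability hypothesis, to the stronger statements ``$c$ infinite over $\R,b,x$'' that the sign computation needs. Once the factorization $h\cdot x = b(bx+c)$ is in hand and $c$ is seen to dominate $bx$, the conclusion is immediate, and the parenthetical claim that $h\cdot x$ lies infinitesimally close to the point at infinity in $\Pp^1$ follows from the identification of large $|x|\in K$ with points approaching $\left(\begin{array}{c} 1\\ 0\end{array}\right)$. The sign (positive versus negative infinite) is then determined by the sign of $c$, which in the present $p_0$ is positive, but the statement only asserts the dichotomy, so no further care is needed there.
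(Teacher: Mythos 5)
Your proposal goes wrong at exactly the step you flag as crucial, and the error is a heir/coheir confusion. The hypothesis is that $\tp(h/\R,x)$ is \emph{finitely satisfiable} in $\R$, i.e.\ $h$ realizes the \emph{coheir} of $p_0$ over $(\R,x)$. The coheir is not obtained by applying the defining schema of $p_0$ to parameters in $(\R,x)$ --- that is the \emph{heir} (Lemma \ref{lem:heir}(ii)), and here the two genuinely differ. In fact finite satisfiability forces the opposite of what you claim: if $x$ is positive infinite, then the formula $b>x$ cannot lie in $\tp(h/\R,x)$, since no real number exceeds $x$; so the coheir contains $b<x$, and similarly $c<bx$ and $c<b|x|$ when $x$ is negative infinite. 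Thus your key claims ``$b>\dcl(\R,x)$'' and ``$c>|bx|$'' are false precisely in the cases where $x$ is infinite, and the factorization argument $b^2x+bc=b(bx+c)$ with $c$ dominating $bx$ collapses. The symptom that something is wrong is your final conclusion that $h\cdot x$ is \emph{always} positive infinite (``determined by the sign of $c$''): this is factually incorrect --- when $x$ is negative infinite, $h\cdot x$ is negative infinite --- and losing that dichotomy would also break the later use of this lemma (Lemma \ref{lem:inff} needs both outcomes, closeness to $I$ and to $-I$, to occur).

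The correct move, which is the paper's, is to apply Lemma \ref{lem:heir}(iii) in the dual direction: finite satisfiability of $\tp(h/\R,x)$ in $\R$ means $\tp(x/\R,h)$ is definable over $\R$, i.e.\ $x$ realizes the \emph{heir} of its own type over $(\R,b,c)$. Hence the genericity transfers to $x$ over $h$, not to $h$ over $x$: if $x$ is positive (resp.\ negative) infinite over $\R$, it remains positive (resp.\ negative) infinite over $\dcl(\R,b,c)$. One then runs three cases on $h\cdot x=b^2x+bc$: if $x$ is finite, then $|b^2x|\leq r_0b^2\in\dcl(\R,b)$ while $bc>\dcl(\R,b)\cdot b$ (as $c$ is infinite over $b$), so the sum is positive infinite; if $x$ is positive infinite, both terms are positive infinite; and if $x$ is negative infinite, then $b^2x<\dcl(\R,b,c)$, so $b^2x+bc$ is negative infinite. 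Your handling of the point at infinity is fine, but the body of the argument needs to be redone along these lines.
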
 
\begin{proof}  So $h = (b,c)$  with $b$ (positive) infinite, and $c$ (positive) infinite over $b$. And $h\cdot x = b^{2}x + bc$.
\begin{itemize}
\item Case (i), $x$ is finite  (positive or negative). Then clearly $b^{2}{x} + bc$ is positive infinite  (as $bc$ is infinite over $|b^{2}x|$).
\item Case (ii),  $x$ is positive infinite. Then clearly $b^{2}x + bc$ is positive infinite.
\item Case (iii), $x$ is negative infinite. Now as $\tp(x/b,c)$ is definable over $\R$, $x$ is negative infinite \emph{over}  $\{b,c\}$, i.e. $x <  \dcl(b,c)$. Hence $b^{2}x < \dcl(b,c)$, whereby  $b^{2} + bc$ is still negative infinite.
\end{itemize}
\end{proof}

Now we consider the homeomorphism (induced by $\pi_{1}$) between $T/\{I,-I\}$ and $\Pp^{1}$ given in \ref{rem:homeo} and the corresponding action $\cdot_{1}$ of $G$ on $T/\{I,-I\}$.   As the identity of $T/\{I,-I\}$ goes to $\left(\begin{array}{c} 1\\ 0    \end{array}\right)$  under $\pi_{1}$, we deduce from Lemma \ref{lem:inf}:

\begin{Corollary} \label{cor:inf} Let $t/\{I,-I\} \in T/\{I,-I\}$ and let $h$ realize $p_{0}$ such that $\tp(h/\R,t)$ is finitely satisfiable in $\R$. Then $h\cdot_{1}(t/\{I,-I\})$ is infinitesimal in $T/\{I,-I\}$ (namely infinitesimally close to the identity or equal to the identity). 
\end{Corollary}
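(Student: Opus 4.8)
The plan is to transport the statement across the $\R$-definable homeomorphism $\pi_1 \colon T/\{I,-I\} \to \Pp^1$ of Remark \ref{rem:homeo} and then simply invoke Lemma \ref{lem:inf}. Recall that, by the very definition of $\cdot_1$, the map $\pi_1$ intertwines the action $\cdot_1$ of $G$ on $T/\{I,-I\}$ with the action $\cdot$ of $G$ on $\Pp^1$, and that it sends the identity of $T/\{I,-I\}$ to the point at infinity $\left(\begin{array}{c} 1\\ 0 \end{array}\right)$. So the corollary will follow once we check that infinitesimal closeness to a standard point transfers across $\pi_1$ and its inverse.

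First I would set $x = \pi_1(t/\{I,-I\}) \in \Pp^1$. Because $\pi_1$ is defined over $\R$, we have $x \in \dcl(\R, t)$, so the hypothesis that $\tp(h/\R, t)$ is finitely satisfiable in $\R$ immediately yields that $\tp(h/\R, x)$ is finitely satisfiable in $\R$ (restricting to a smaller parameter set, still contained in $\dcl(\R,t)$, preserves finite satisfiability). This is exactly the hypothesis needed to apply Lemma \ref{lem:inf}, which tells us that $h \cdot x$ is positive infinite or negative infinite, i.e. lies in every standard neighbourhood of $\left(\begin{array}{c} 1\\ 0 \end{array}\right)$, hence is infinitesimally close to $\left(\begin{array}{c} 1\\ 0 \end{array}\right)$ in $\Pp^1$. (If $x$ happens to be the point at infinity itself, then $h$ fixes it, giving the ``equal to the identity'' case.)

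It remains to pull this back, and this is the only nonroutine ingredient. Since $\pi_1$ intertwines the two actions, $\pi_1\bigl(h \cdot_1 (t/\{I,-I\})\bigr) = h \cdot x$. The point to check is that $\pi_1^{-1}$ carries ``infinitesimally close to the standard point $\left(\begin{array}{c} 1\\ 0 \end{array}\right)$'' to ``infinitesimally close to the identity of $T/\{I,-I\}$''. This holds because $\pi_1^{-1}$ is a continuous map defined over $\R$: the standard-part map commutes with $\R$-definable continuous functions, so the preimage of a point infinitesimally close to a standard point $z_0$ is infinitesimally close to the standard point $\pi_1^{-1}(z_0)$. Applying this with $z_0 = \left(\begin{array}{c} 1\\ 0 \end{array}\right)$, whose preimage is the identity, gives that $h \cdot_1 (t/\{I,-I\})$ is infinitesimal in $T/\{I,-I\}$, as required. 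Thus the main (and essentially only) obstacle is this transfer of infinitesimal closeness, which is immediate from the definability of $\pi_1$ over $\R$ together with its continuity; everything else is bookkeeping about which parameter sets witness finite satisfiability.
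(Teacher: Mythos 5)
Your proof is correct and takes essentially the same approach as the paper, which likewise deduces the corollary by transporting Lemma \ref{lem:inf} across the $\R$-definable homeomorphism induced by $\pi_{1}$, noting that it intertwines the actions and sends the identity of $T/\{I,-I\}$ to the point at infinity. The details you spell out (transfer of finite satisfiability to $x\in\dcl(\R,t)$, transfer of infinitesimal closeness under the continuous $\R$-definable inverse map, and the trivial case where $x$ is the point at infinity itself) are exactly what the paper leaves implicit.
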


We now consider the action $\cdot$ of $G$ on $T$ induced by the action of $G$ on $V$ and the homeomorphism between $T$ and $V$ induced by $\pi$. And we use Remark \ref{rem:act} to conclude:

\begin{Lemma} \label{lem:inff} Let $t\in T$ and let $h$ realize $p_{0}$ such that $\tp(h/\R,t)$ is finitely satisfiable in $\R$. Then $h\cdot t$ is infinitesimally close to the identity $I$  (i.e. $(1,0)$)  or to $-I$  (i.e. $(-1,0)$). Moreover both possibilities happen. Namely if  $t$ is infinitesimally close to $-I$ so is $h\cdot t$, and if $t$ is infinitesimally close to $I$ then so is $h\cdot t$. 
\end{Lemma}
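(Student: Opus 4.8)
The plan is to leverage Corollary \ref{cor:inf}, which already tells us the whole story at the level of $T/\{I,-I\}$: for $h$ realizing $p_{0}$ with $\tp(h/\R,t)$ finitely satisfiable in $\R$, the element $h\cdot_{1}(t/\{I,-I\})$ is infinitesimal in $T/\{I,-I\}$, i.e.\ infinitesimally close to (or equal to) the identity coset. The only genuinely new content in this lemma is the lift from $T/\{I,-I\}$ back to $T$: the coset $(1/\{I,-I\})$ has exactly two preimages in $T$, namely $I = (1,0)$ and $-I = (-1,0)$, so the infinitesimal neighbourhood of the identity in $T/\{I,-I\}$ pulls back under the two-to-one covering $T \to T/\{I,-I\}$ to the disjoint union of an infinitesimal neighbourhood of $I$ and an infinitesimal neighbourhood of $-I$ in $T$.

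The first step is therefore to record that, by Remark \ref{rem:act}, $h\cdot t$ is one of the two elements of $T$ lying over $h\cdot_{1}(t/\{I,-I\})$. Since Corollary \ref{cor:inf} places that class infinitesimally close to the identity of $T/\{I,-I\}$, and since the covering map $T \to T/\{I,-I\}$ is a local homeomorphism away from the branch points, the preimage $h\cdot t$ must lie infinitesimally close to one of the two sheets, i.e.\ infinitesimally close to $I$ or to $-I$. This establishes the first assertion.

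For the ``moreover" part I would argue that which of the two preimages occurs is locally constant and matches the sheet that $t$ itself sits on. Concretely: write $t = \pm t'$ where $t'$ is infinitesimally close to $I$, so that $t$ is infinitesimally close to $I$ precisely when the sign is $+$ and to $-I$ when the sign is $-$. The action $\cdot$ of $G$ on $T$ is continuous (indeed semialgebraic), so $h\cdot t$ and $h\cdot t'$ differ by the same central sign $\pm$: that is, $h\cdot(-t') = -(h\cdot t')$, because $-I$ is central in $G$ and the $G$-action on $V = G/H$ commutes with multiplication by the central element $-I$ (which fixes the coset structure), so it transfers to multiplication by $-I$ on $T$ under the homeomorphism $\pi_{|T}$. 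Since $h\cdot t'$ is infinitesimally close to $I$ by the first part, $h\cdot(-t') = -(h\cdot t')$ is infinitesimally close to $-I$. Hence $t$ close to $I$ gives $h\cdot t$ close to $I$, and $t$ close to $-I$ gives $h\cdot t$ close to $-I$, and both cases genuinely occur.

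The step I expect to be the main obstacle is the commutation $h\cdot(-t) = -(h\cdot t)$, i.e.\ that the central element $-I$ is equivariant for the $G$-action on $T$. This requires care because the action on $T$ is defined indirectly, through the identification of $T$ with $V = G/H$ and the decomposition $g = t_{1}h_{1}$; one must check that multiplying $t$ by $-I \in T$ on the left corresponds, under $\pi_{|T}$, to the action of the central element and that this is preserved by $g\cdot(-)$. Since $-I$ lies in the centre of $G$ and also in $T$, the uniqueness of the decomposition $gt = t_{1}h_{1}$ gives $g(-t) = (-t_{1})h_{1}$, i.e.\ $g\cdot(-t) = -(g\cdot t)$ directly; spelling this out carefully is the one place where the bookkeeping between the three spaces $T$, $V$ and $T/\{I,-I\}$ must be handled precisely.
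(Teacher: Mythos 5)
Your first assertion is proved correctly and exactly as in the paper: Remark \ref{rem:act} places $h\cdot t$ over $h\cdot_{1}(t/\{I,-I\})$, Corollary \ref{cor:inf} puts that class in the monad of the identity of $T/\{I,-I\}$, and the preimage of that monad under the two-to-one quotient $T\to T/\{I,-I\}$ is the union of the monads of $I$ and of $-I$. Your identity $g\cdot(-t)=-(g\cdot t)$ is also correct, by exactly the argument you give: $g(-t)=-(gt)=(-t_{1})h_{1}$ with $-t_{1}\in T$, so uniqueness of the $T\cdot H$ decomposition applies; this is the same observation as the paper's parenthetical that $-I$ commutes with $h$. The genuine gap is in the ``moreover'' part, at the sentence ``Since $h\cdot t'$ is infinitesimally close to $I$ by the first part.'' The first part yields only the disjunction: $h\cdot t'$ is close to $I$ \emph{or} to $-I$. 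Deciding which alternative occurs when $t'$ is close to $I$ is precisely the content of the clause being proved, so this step is circular. What your equivariance argument actually establishes is that the two halves of the ``moreover'' statement are equivalent to each other ($t$ near $I$ goes near $I$ if and only if $t$ near $-I$ goes near $-I$); it cannot decide which way things go, because the hypothetical ``swapping'' behaviour --- the map $s\mapsto h\cdot s$ sending points near $I$ to points near $-I$ and vice versa --- is just as compatible with continuity, with the fixed points $h\cdot I=I$ and $h\cdot(-I)=-I$, with central equivariance, and with the disjunctive conclusion of the first part, as the ``preserving'' behaviour is. One can in fact exhibit a homeomorphism of the circle with all of these properties that does move some of the relevant points near $I$ into the monad of $-I$, so no argument from these ingredients alone can close the gap.

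To break the symmetry you must look at the action itself at least once. The cheapest repair: writing $h=(b,c)$ and $t=(x,y)$, and matching first columns in $ht=t_{1}h_{1}$, one finds that $h\cdot t=t_{1}$ is the rescaling of the vector $(bx+cy,\, b^{-1}y)$ to the unit circle by the positive factor $b_{1}^{-1}$; in particular the first coordinate of $h\cdot t$ has the sign of $bx+cy$. Now if $t$ is infinitesimally close to $I$ and $\tp(h/\R,t)$ is finitely satisfiable in $\R$, then by Lemma \ref{lem:heir}(iii) the type $\tp(t/\R,h)$ is the heir of $\tp(t/\R)$, so $x$ is close to $1$ and $|y|$ is infinitesimal over $\dcl(\R,b,c)$; in particular $|y|<1/c$, so $|cy|<1$, while $bx$ is positive infinite. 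Hence $bx+cy>0$, and since by the first part $h\cdot t$ lies in the monad of $I$ or of $-I$, positivity of its first coordinate forces the monad of $I$. Your central-equivariance step then correctly disposes of the case $t$ near $-I$, and both cases visibly occur. (For what it is worth, the paper's own proof of this clause is also very terse --- ``by continuity and the fact that $h\cdot I=I$ and $h\cdot(-I)=-I$'' --- but the extra ingredient it leans on is the behaviour of the concrete circle homeomorphism $s\mapsto h\cdot s$, which is what the computation above supplies; citing the first part, as you do, cannot substitute for it.)
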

\begin{proof} The first part follows from Corollary \ref{cor:inf}. The rest follows by continuity and the fact that $h\cdot I = I$ and $h\cdot -I = -I$  (as  $-I$ commutes with $h$).
\end{proof}

Remember $q_{0}$ is the type of a ``positive infinitesimal" in $T$. We let $q_{1}$ denote the type of an element of $T$ infinitesimally close to $-I$ and on the ``positive side".
Now we can conclude:

\begin{Proposition} \label{prop:main} $r_{0}*S_{V,na}(\R)$ has two elements, $\pi(q_{0})$ and $\pi(q_{1})$. 
\end{Proposition}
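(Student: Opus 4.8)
The goal is to show that the set $r_0 * S_{V,na}(\R)$, which by Lemma~\ref{lem:bij} is the $\pi$-image of the ideal group $r_0 * I$, consists of exactly the two elements $\pi(q_0)$ and $\pi(q_1)$. Identifying $V$ with $T$ via $\pi$, I would compute $r_0 * \pi(q)$ for an arbitrary nonalgebraic $q \in S_{T}(\R)$ and show the answer is always $\pi(q_0)$ or $\pi(q_1)$, and that both occur. Recall $r_0 = \tp(th/\R)$ with $h$ realizing $p_0$ and $t$ a coheir of $q_0$ over $(\R,h)$.

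\emph{Step 1: reduce to the action of $H$.}
First I would unwind the definition: $r_0 * \pi(q) = \tp\!\big((th)\cdot b / \R\big)$ where $b \in V$ realizes $\pi(q)$ and $th$ realizes the coheir of $r_0$ over $(\R,b)$. Since $(th)\cdot b = t\cdot(h\cdot b)$ and $t$ is an infinitesimal in $T$, the key is to understand $h\cdot b$, i.e.\ the effect of an element realizing $p_0$ on an arbitrary point of $T$. This is exactly the content of Lemma~\ref{lem:inff}: for $b = t'/\{I,-I\}$ (equivalently, working in $T$, for any $t' \in T$ realizing $q$), $h\cdot t'$ is infinitesimally close to $I$ or to $-I$, and the two cases are distinguished by whether $t'$ sits infinitesimally near $I$-side or $-I$-side of the circle — but more importantly, by continuity, \emph{which} of $I,-I$ the image clusters at is governed by which semicircle $\st(t')$ lies in.

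\emph{Step 2: pin down the two outputs.}
Having $h\cdot t'$ infinitesimally close to $I$ (resp.\ $-I$), I then apply $t$ (the positive infinitesimal from $q_0$) on the left. Since $t$ realizes the coheir of $q_0$ over everything in sight, $t\cdot(h\cdot t')$ lands infinitesimally close to $I$ on the positive side, i.e.\ realizes $q_0$, in the first case, and infinitesimally close to $-I$ on the positive side, i.e.\ realizes $q_1$, in the second. Hence $\pi(r_0 * \pi(q)) \in \{\pi(q_0), \pi(q_1)\}$ for every nonalgebraic $q$. That both values are actually attained follows from the ``both possibilities happen'' clause of Lemma~\ref{lem:inff}: taking $q = q_0$ recovers $\pi(q_0)$ (consistent with $r_0$ being idempotent), while choosing a $q$ concentrating near $-I$ yields $\pi(q_1)$.

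\emph{The main obstacle.}
The delicate point is verifying that the ``positive side'' is preserved, so that the outputs are genuinely $q_0$ and $q_1$ rather than their reflections $q_0^{-1}, q_1^{-1}$. This requires tracking orientation through the local homeomorphism $\pi_1$ and the composition of the two infinitesimal translations, using that $t$ contributes a fixed positive-side infinitesimal that dominates. A second subtlety is ensuring the argument is uniform in $q$: the case split in Lemma~\ref{lem:inff} depends on the standard part of the point being acted on, so I would need to confirm that \emph{every} nonalgebraic $q$ falls into one of the two buckets and none produces a third limit, which is exactly why the first part of Lemma~\ref{lem:inff} (only $I$ and $-I$ arise as cluster points) is essential. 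Once orientation and exhaustiveness are secured, the conclusion that $r_0 * S_{V,na}(\R) = \{\pi(q_0), \pi(q_1)\}$ is immediate.
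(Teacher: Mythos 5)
Your proposal is correct and takes essentially the same route as the paper's own proof: identify $V$ with $T$ via $\pi$, write a realization of $r_0$ as $th$ with the appropriate coheir conditions, invoke Lemma \ref{lem:inff} to see that $h$ sends any nonalgebraic point of $T$ infinitesimally close to $I$ or to $-I$ (with both cases occurring), and then note that left multiplication by $t$ turns these into $q_0$ and $q_1$ respectively. The only difference is bookkeeping: the paper arranges the finite-satisfiability hypotheses explicitly (first $\tp(h/\R,t_1)$, then $\tp(t/\R,h,t_1)$) before applying Lemma \ref{lem:inff}, which is what your phrase about $t$ being a coheir ``over everything in sight'' amounts to.
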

\begin{proof}  We will work instead with the action $\cdot$ of $G$ on $S_{T}(\R)$ induced by the homeomorphism induced by $\pi$ between $S_{T}(\R)$ and $S_{V}(\R)$.  
So for  a type $q$ of an element of $T$, by $r_{0}*q$ we mean $\tp(g\cdot t/\R)$ where $t$ realizes $q$ and $g$ realizes $r_{0}$ such that $\tp(g/\R,t)$ is finitely satisfiable in $\R$. 

So let  $t_{1} \in T\setminus T(\R)$  (i.e. $t_{1}$ realizes a nonalgebraic type in $S_{T}(\R)$). And let $th$ realize $r_{0}$ such that $\tp(th/\R,t_{1})$ is finitely satisfiable in $\R$. Then $\tp(h/\R,t_{1})$ is finitely satisfiable in $\R$, and we may assume that $\tp(t/\R,h,t_{1})$ is finitely satisfiable in $\R$. (And remember that $t$ realizes $q_{0}$ and $h$ realizes $p_{0}$).  By Lemma \ref{lem:inff}, $h\cdot t_{1} = t_{2}$ say,  is infinitesimally close to either $I$ or $-I$  (and each can happen for suitable choice of $t_{1}$). Note also that $t\cdot t_{2}$ is just $tt_{2}$ (product in $T$). Now  as $t$ realizes $q_{0}$ and its type over $(\R,t_{2})$ is finitely satisfiable in $\R$ it is easy to see that $tt_{2}$ realizes  $q_{0}$ if $t_{2}$ is infinitesimally close to $I$, and realizes $q_{1}$ if $t_{2}$ is infinitesimally close to $-I$.  This concludes the proof of Proposition \ref{prop:main}.
\end{proof}

Putting together with earlier results we summarize (where $r_{0}$ is as in Definition \ref{def:type}).

\begin{Theorem} \label{thm:main}
\begin{enumerate}
\item[(i)] $I = \cl(G(\R)r_{0})$ is a minimal closed $G(\R)$-invariant subset of $S_{G}(M)$.
\item[(ii)] $r_{0}$ is an idempotent, with respect to the Ellis semigroup structure $*$ on $S_{G}(\R)$.
\item[(iii)] The ideal group $(r_{0}*I, *)$ has two elements.
\end{enumerate}
\end{Theorem}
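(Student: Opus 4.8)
The plan is to assemble the three assertions from results already established; almost all of the substantive work has been done in the preceding lemmas, so the proof is essentially a matter of bookkeeping, and I would present it as such.

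For part (i) I would simply invoke Proposition \ref{prop:min}, which already asserts that $I = \cl(G(\R)r_{0})$ is a minimal $G(\R)$-invariant closed subspace of $S_{G}(\R)$ (and, as a bonus, is homeomorphic via $\pi$ to $S_{V,na}(\R)$). For part (ii), the idempotency $r_{0}*r_{0} = r_{0}$ is exactly Lemma \ref{lem:idem} (itself the special case of Lemma \ref{lem:q} where $p = r_{0}$), so no further argument is required. These two parts are immediate citations.

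The content is in part (iii). Here I would combine two ingredients. First, Lemma \ref{lem:bij} gives that $\pi$ restricts to a \emph{bijection} between $r_{0}*I$ and the set $r_{0}*S_{V,na}(\R)$; since I only need to transfer a cardinality, a set bijection suffices and I do not need it to respect any algebraic structure. Second, Proposition \ref{prop:main} computes $r_{0}*S_{V,na}(\R)$ explicitly as $\{\pi(q_{0}),\pi(q_{1})\}$, and these are genuinely two distinct points because $q_{0}$ (near $I$) and $q_{1}$ (near $-I$) are distinct types in $S_{T}(\R)$ and $\pi$ induces a homeomorphism $S_{T}(\R)\to S_{V}(\R)$. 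Hence $|r_{0}*I| = 2$. Finally, because $I$ is a minimal closed flow by (i) and $r_{0}\in I$ is idempotent by (ii), the general Ellis-semigroup fact recorded in Section~1 guarantees that $(r_{0}*I,*)$ is a \emph{group}; a group with exactly two elements must be the cyclic group of order two, which finishes (iii).

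I do not expect a genuine obstacle in this final assembly, since each step is a direct appeal to a prior result; the one point worth stating carefully is that the transfer of the count through Lemma \ref{lem:bij} uses only bijectivity, while the group structure comes separately from the abstract theory. The real difficulty lives upstream, in Proposition \ref{prop:main}, where the two-element answer arises from the dichotomy in Lemma \ref{lem:inff} that $h\cdot t$ is infinitesimally close to either $I$ or $-I$ — a reflection of the two-fold cover $T\to T/\{I,-I\}\cong\Pp^{1}$ — and that remaining near $I$ versus near $-I$ after multiplying by the positive infinitesimal $t$ yields $q_{0}$ versus $q_{1}$. That is the step that supplies the nontriviality answering Newelski's question, and in writing up the theorem I would emphasize that the order-two group is exactly this $I$-versus-$(-I)$ distinction made precise.
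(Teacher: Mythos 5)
Your proposal is correct and follows the paper's proof exactly: part (i) cites Proposition \ref{prop:min}, part (ii) cites Lemma \ref{lem:idem}, and part (iii) combines Proposition \ref{prop:main} with Lemma \ref{lem:bij}, just as the paper does. Your added remarks (that only a set-theoretic bijection is needed for the cardinality transfer, and that the group structure comes from the abstract Ellis theory) are sound elaborations of what the paper leaves implicit.
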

\begin{proof} (i) is Proposition \ref{prop:min}. (ii) is Lemma \ref{lem:idem}. And (iii) follows from Proposition \ref{prop:main} and Lemma \ref{lem:bij}.
\end{proof}

\vspace{5mm}
\noindent
We finish the paper with some remarks on extensions of our results.  Note the ideal group above is precisely the centre of 
$\SL(2,\R)$. Now any finite cover $G$ of $\PSL(2,\R)$ can be realized as a semialgebraic group over $\R$. The proofs above go over, essentially word-for-word, to show that the ideal group of $G$ coincides with the finite group $Z(G)$. The idempotent $r_{0}$ and minimal closed $G$-invariant subset $I$ of $S_{G}(\R)$ are chosen in exactly the same way, and $Z(G)$ is contained in $T$.\\

It is natural to also ask about the case where $G = \widetilde{\SL(2,\R)}$, the universal cover of $\SL(2,\R)$. Now $G$ can be naturally interpreted (defined) in the two-sorted structure $M = ((\Z,+), (\R,+,\times))$. Again all types over this standard model are definable, the $\Z$-sort being stable. 
$H$ will be as before and the role of the maximal compact $T$ is now played by the universal cover of $SO(2,\R)$, interpreted naturally on the set
$\Z \times SO(2,\R)$. The above analysis goes through to show, among other things, that the ideal group is $\widehat{\Z}$, the profinite completion of 
$(\Z,+)$, which is in fact precisely the set of generic types of $\Z$. \\

We expect that all this again goes through, with an arbitrary real closed field $K$ in place of $\R$, where we expand the structure by all externally definable sets and consider the action of $G(K)$ on $S_{ext,G}(K)$. \\

Finally we expect that essentially the same results hold for arbitrary simple (modulo discrete centre) noncompact Lie groups. 

%Our proofs above show that if $G$ is $\PSL(2,\R)$ then the ideal group is trivial. Moreover if  $G$ is a semialgebraic connected finite cover of $\SL(2,\R)$ then the ideal group again is isomorphic to $Z(G)$.
%We expect that all this goes through for arbitrary real closed fields $K$ in place of $\R$ where we expand by externally definable sets (and consider the action of $G(K)$ on the space $S_{ext,G}(K)$). 

%It would be interesting to look at the case of $G = \widetilde{\SL(2,\R)}$, the universal cover of $\SL(2,\R)$, considered as group definable in the $2$-sorted structure  $((\Z,+), (\R,+,\times))$. Again all types over this standard model are definable (the $\Z$-sort being stable). 

\end{document}